\documentclass[12pt,twoside,final]{article}
\usepackage{chngpage}
\usepackage[a4paper,left=22mm,right=22mm,top=30mm,bottom=30mm]{geometry}
\usepackage{imakeidx}
\usepackage{xr-hyper}
\usepackage{xr}
\usepackage[all]{xy}
\usepackage[center]{titlesec}
\titleformat*{\section}{\large\bfseries}
\usepackage{enumerate,cite}
\usepackage[latin1]{inputenc}
\usepackage{amssymb}
\usepackage{amsthm}
\usepackage{amsmath}
\usepackage{amsfonts}
\usepackage{mathrsfs}
\usepackage{hyperref}
\usepackage{color}
\usepackage{graphicx}
\usepackage{setspace}
\usepackage{bm}
\usepackage{enumitem}
\usepackage{booktabs}
\usepackage[capitalize]{cleveref}
\usepackage{tikz}
\usetikzlibrary{matrix,chains}
\usepackage[center]{titlesec}
\titleformat*{\section}{\normalsize\bfseries}
\usepackage{indentfirst}

\usepackage{amscd}

\newcommand{\Aut}{\operatorname{Aut}\nolimits}

\newcommand{\IBr}{\operatorname{IBr}\nolimits}

\theoremstyle{remark}

\theoremstyle{definition}

\theoremstyle{plain}
\newtheorem{thm}{Theorem}[section]

\newtheorem{lem}[thm]{Lemma}

\newtheorem{cor}[thm]{Corollary}

\newtheorem{prop}[thm]{Proposition}

\newtheorem{order}[thm]{}

\newtheorem{conj*}{Conjecture}
\numberwithin{equation}{thm}

\usepackage{lastpage}
\usepackage{fancyhdr}

\begin{document}

\begin{center}{\Large\bf 2-blocks with abelian defect groups and inertial quotient of prime order }
\bigskip{\large}

\bigskip{Qianhu Zhou and Kun Zhang}

\bigskip{\scriptsize 1.School of Mathematics and Statistics, Central China Normal University, Wuhan, 430079, China
\\2.Faculty of Mathematics and Statistics, Hubei University, Wuhan, 430062, China}
\end{center}

{\noindent\small{\bf Abstract}
In this paper, we classify all $2$-blocks for which the defect groups are abelian and the inertial quotient has prime order.
As a consequence, we prove that Brou\'e's abelian defect group conjecture holds for all blocks under consideration here. }

\medskip\noindent{\small{{{\bf Keywords} blocks; inertial quotient; Brou\'e's abelian defect group conjecture}
}}

\section{Introduction}
Let $p$ be a prime and $(\mathcal{K}, \mathcal{O}, k)$ a complete $p$-modular system. We assume that the residue field $k$ is
algebraically closed of characteristic $p$ and the field of fractions $K$ is of characteristic 0. Throughout this
paper, we assume that $\mathcal{K}$ is large enough for the finite groups considered below.

Let $G$ be a finite group. A $p$-block $b$ of $G$ is a primitive central idempotent of
the group algebra ${\cal O}G$. Let $P$ be a defect group of $b$ and  $b_0$ the Brauer correspondent of $b$ in the normalizer ${\rm N}_G(P)$.
A key goal in block theory is to understand the connection between the block algebras
 ${\cal O}Gb$ and ${\cal O}{\rm N}_G(P)b_0$.
 The celebrated \textit{Brou\'e's abelian defect group conjecture} says that the block algebras ${\cal O}Gb$ and ${\cal O}{\rm N}_G(P)b_0$ are derived equivalent when the defect group $P$ is abelian(see \cite[6.1]{B}).

Following \cite{P2011}, a block is called \textit{inertial} if $\mathcal{O}Gb$ and $\mathcal{O}{\rm N}_G(P)b_0$ are basically Morita equivalent \cite{P1999}, and it is well-known that Brou\'e's conjecture holds for such blocks.
Numerous examples of inertial blocks with abelian defect groups are known, such as blocks of $p$-solvable groups, 2-blocks with defect groups isomorphic to $C_{2^{n_1}} \times C_{2^{n_2}} \times \cdots \times C_{2^{n_t}}$ where $n_i \geq 2$ for all $i$ \cite{ZZ1}, blocks with only one orbit of irreducible Brauer characters \cite{Z}, and so on.

The classification of $2$-blocks with abelian defect groups for finite quasi-simple groups has been established (see \cite[Theorem 6.1]{EKK} or Theorem \ref{2-block-abel} below).
This classification has been used to determine the Morita equivalence classes of $2$-blocks with  certain abelian defect groups \cite{CGA, EL2, EKK, ECW2016, EM, WZZ}. This, in turn, allows for a proof of Brou\'e's conjecture in these cases.

Let $(P,e_P)$ be a maximal $b$-Brauer pair \cite[Section 6.3]{L}.  We denote by ${\rm N}_G(P,e_P)$ the stabilizer of $(P,e_P)$ in $G$ under conjugation. The $\textit{inertial quotient}$ of $b$ is then defined as the quotient group $\mathbb{E}(b)={\rm N}_G(P, e_P)/P{\rm C}_G(P)$. In particular, when $P$ is abelian, $\mathbb{E}(b)$ is the quotient group ${\rm N}_G(P, e_P)/{\rm C}_G(P)$. Then it is a group of automorphisms of $P$,
and the \textit{hyperfocal subgroup} (see\cite[1.7]{P2}) of $b$ with respect to $(P,e_P)$ is equal to $[P,\mathbb{E}(b)]$.

We focus on $2$-blocks with abelian defect groups whose inertial quotients have prime order, and prove the following classification.

\begin{thm}\label{Main1}
Let $b$ be a $2$-block of a finite group $G$ with an abelian defect group $P$ and with $\mathbb{E}(b)$ of prime order. Then one (or more) of the following holds:
\begin{enumerate}[label=(\roman*), itemsep=-3pt, topsep=7pt]
    \item $b$ is inertial;
    \item the hyperfocal subgroup of $b$ is a Klein four-group;
    \item $b$ is basic Morita equivalent to the principal block of $A_1(2^a) \times R$, where $2^a - 1$ is prime and $R$ is an abelian $2$-group.
\end{enumerate}
\end{thm}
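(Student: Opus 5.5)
We argue by a minimal counterexample. Let $b$ be a $2$-block of $G$ with abelian defect group $P$ and $\mathbb{E}(b)=E$ of odd prime order $\ell$, and assume (i), (ii) and (iii) all fail for $b$, with $[G:Z(G)]$ and then $|G|$ minimal. Since $P$ is abelian and $\ell$ is odd, coprime action gives $P=[P,E]\times C_P(E)$, and the hyperfocal subgroup is $Q=[P,E]$. If $Q$ is homocyclic of exponent at least $4$, or more generally $Q$ has no $E$-invariant elementary abelian section on which $E$ acts faithfully forcing non-inertiality, then the results of \cite{ZZ1} give that $b$ is inertial. So the point is to control the blocks whose hyperfocal subgroup is not Klein four and where $E$ acts nontrivially on $\Omega_1(Q)$.

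\textbf{First step: reductions.} Using the standard reductions for basic Morita equivalence (Fong--Reynolds, K\"ulshammer--Puig for normal subgroups with nilpotent covered blocks, and the reduction through $O_{2'}(G)$ and central extensions as in \cite{EKK}), we reduce to the case where $b$ is quasiprimitive: for every $N\trianglelefteq G$, $b$ covers a unique $G$-stable block of $N$. Inertiality, the isomorphism type of the hyperfocal subgroup, and the conclusion (iii) are each preserved under these reductions, since they keep $P$ and $E$ (the inertial quotient can only shrink, which makes the covered block nilpotent and hence $b$ inertial by Puig's theorem on extensions of nilpotent blocks). Then $O_{2'}(G)\le Z(G)$, and $O_2(G)\le Z(G)$ may be assumed as well; otherwise $O_2(G)\cap Q\neq 1$ and a quotient argument applies. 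Hence the generalized Fitting subgroup is $F^*(G)=Z(G)E(G)$ with $E(G)\neq 1$.

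\textbf{Second step: the components.} Let $L_1,\dots,L_t$ be the components and let $B$ be the block of $E(G)$ covered by $b$. Its defect group $P\cap E(G)$ is abelian, and its inertial quotient embeds in $E$ modulo permutation of components. The prime order forces that $E$ acts nontrivially on exactly one $G$-orbit of components. If $G$ permutes several components transitively, the covered block of the product has inertial quotient containing a product of copies, which contradicts $|E|=\ell$ unless the action is trivial there. So essentially one component $L$ carries a non-nilpotent block $b_L$ with abelian defect group and inertial quotient of order $\ell$, while the remaining components carry nilpotent blocks. Next we apply \cite[Theorem 6.1]{EKK} (Theorem \ref{2-block-abel}) to $b_L$. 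The possibilities with inertial quotient of odd prime order are:
\begin{itemize}
\item blocks of $\mathrm{SL}_2(2^a)$ (that is, $A_1(2^a)$) with $E$ of order $2^a-1$, which must be prime;
\item blocks with Klein four defect group, such as $\mathrm{PSL}_2(q)$ and $A_5$-type blocks, with $E$ of order $3$;
\item ${}^2G_2(q)$, $J_1$, and the blocks Morita equivalent to these or to products of $\mathrm{SL}_2(2^a)$, which have elementary abelian defect group of order $8$ and $E$ of order $7$ or $21$;
\item nilpotent-covered cases.
\end{itemize}
Orders that are not prime are excluded. For the case $C_2^3\rtimes C_7$, the hyperfocal subgroup is $C_2^3$, and we must show that $b$ is basic Morita equivalent to the principal block of $A_1(8)\times R$, which is (iii) with $a=3$. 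For the Klein four cases we are in (ii).

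\textbf{Final step and main obstacle.} We then lift from $L$ back to $G$. We have $P=(P\cap L)\times R$ with $R$ centralized by $E$, and we must show $b$ is basic Morita equivalent to $b_L\otimes \mathcal{O}R$. The remaining components and the central part contribute only nilpotent blocks, and outer automorphisms of $L$ of odd order, such as field automorphisms, must act trivially on $E$ because $E$ has prime order. This uses the Külshammer--Puig theory together with the known Morita equivalences for $2$-blocks with defect groups $C_2^n$ or $C_{2^m}\times C_2^3$ from \cite{EKK, WZZ}. I expect this to be the main obstacle. Showing that the extension from $F^*(G)$ to $G$ does not create new Morita classes, and in particular that the relevant second cohomology (Külshammer--Puig) class vanishes, requires analysing $\mathrm{Out}(L)$ case by case, the field automorphisms of $A_1(2^a)$ above all. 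The vanishing should follow because $H^2(E,k^\times)=0$ for $E$ cyclic.
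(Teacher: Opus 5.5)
Your outline has the right skeleton: reduce to a reduced block, find a component $X$ whose block $b_X$ is not nilpotent covered, and run through Theorem~\ref{2-block-abel}. But it leaves genuine gaps, and the most serious is the step you yourself call the main obstacle.

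\textbf{The $A_1(2^a)$ case is not proved.} You want to show that $b$ is basic Morita equivalent to $b_X\otimes\mathcal{O}R$ by a case-by-case study of $\mathrm{Out}(X)$ and the vanishing of a K\"ulshammer--Puig class ``because $H^2(E,k^\times)=0$''. This does not work as an argument.
\begin{itemize}
\item Such an obstruction lives in $H^2$ of a quotient of $G$ by a normal subgroup (e.g.\ of $G/X\mathrm{C}_G(X)\le\mathrm{Out}(X)$), not in $H^2$ of $\mathbb{E}(b)$.
\item The explicit classifications of \cite{EKK, WZZ} cover only special defect groups, not $(P\cap X)\times R$ for an arbitrary abelian $2$-group $R$.
\end{itemize}
The missing idea is Lemma~\ref{G=N}: for reduced $b$ with $P$ not normal and $|\mathbb{E}(b)|$ prime, every normal subgroup $N\ge\mathrm{C}_G(P)$ equals $G$. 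The reason is that the covered block of $N$ has inertial quotient a genuine subgroup of $\mathbb{E}(b)$. If that subgroup is trivial, the block is nilpotent, which contradicts reducedness since $P$ is not normal. Otherwise $\mathrm{N}_G(P,e_P)\le N$, and the Frattini argument gives $G=N$.

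Here is how this settles the case. Take $D=P\cap X$, a Sylow $2$-subgroup of $X$, and $N=X\mathrm{C}_G(D)$; the lemma gives $G=X\mathrm{C}_G(D)$. Then \cite[Lemma 4.10]{EM} gives $G=X\times L$ with $L=\mathrm{C}_G(X)$. Now $\mathbb{E}(b)\cong\mathbb{E}(b_X)\times\mathbb{E}(b_L)$ forces $2^a-1$ to be prime and $b_L$ to be nilpotent, and reducedness makes $L$ an abelian $2$-group. So after reduction $G$ \emph{is} $A_1(2^a)\times R$, and no extension problem arises. The same lemma also yields $\mathrm{O}_2(G)\le\mathrm{Z}(G)$ (take $N=\mathrm{C}_G(\mathrm{O}_2(G))$) and the normality of every component (Corollary~\ref{X_i is normal}). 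You replace these by an unspecified ``quotient argument'' and a heuristic about permuted components.

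\textbf{The comparison of $\mathbb{E}(b_X)$ with $\mathbb{E}(b)$ is unjustified.} You assert that the inertial quotient of the covered block of $\mathrm{E}(G)$ or of $X$ ``embeds in $E$''. Since $P\not\le X$, this is not a formal consequence of the definitions: $\mathbb{E}(b_X)$ is computed from Brauer pairs $(P\cap X,f)$ of $X$, which must be matched with $b$-Brauer pairs. You give neither a proof nor a reference, yet you rely on it in two places.
\begin{itemize}
\item You use it to discard ${}^2G_2(q)$ and $J_1$ because $|\mathbb{E}(b_X)|=21$, and you overlook the $Co_3$ block of Theorem~\ref{2-block-abel}(ii) altogether. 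The paper needs Lemma~\ref{cannot} here. For $J_1$ and $Co_3$, $\mathrm{Out}(X)=1$ gives $G=X\times L$. For ${}^2G_2(q)$, K\"ulshammer's isomorphism $\mathcal{O}\bar G\bar b_X\cong\mathcal{O}Xb_X$ and the Alperin weight conjecture give $|\IBr(b)|=5|\Pi|$, which is incompatible with $|\IBr(b)|=|\mathbb{E}(b)|\in\{3,7\}$.
\item You declare without proof that a Klein four hyperfocal subgroup of $b_X$ puts $b$ in case (ii), and you tacitly assume $[P,E]\le P\cap X$. The paper proves the latter via Lemma~\ref{NGPeP}. It proves the former in Lemma~\ref{G is Klein}, using \cite[Lemma 3.7]{Ta} when $|P\cap X|=4$. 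When $|P\cap X|>4$ (case (iii) of Theorem~\ref{2-block-abel} allows an abelian direct factor, not just Klein four defect groups), it identifies $X/\mathrm{Z}(X)$ as being of type $E_7(q)$ or $D_n(q)$ and applies Lemma~\ref{Iso}, which needs $\mathrm{Out}(X)$ to have an abelian Hall $2'$-subgroup.
\end{itemize}

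A minor point: \cite{ZZ1} concerns defect groups with no direct factor $C_2$, not the hyperfocal subgroup, and your ``more generally'' clause is not a known theorem. Fortunately you do not need it.
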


As a consequence, we obtain the following corollary.

\begin{cor}\label{Main2}
Brou\'e's abelian defect group conjecture is true for the blocks considered in Theorem \ref{Main1}.
\end{cor}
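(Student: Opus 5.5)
We deduce the corollary from Theorem \ref{Main1}, using that Broué's conjecture is compatible with basic Morita equivalences that respect Brauer correspondents. Let $b$ be as in Theorem \ref{Main1}, with defect group $P$ and Brauer correspondent $b_0$ in ${\rm N}_G(P)$. We must show that $\mathcal{O}Gb$ and $\mathcal{O}{\rm N}_G(P)b_0$ are derived equivalent. We treat the three cases of Theorem \ref{Main1} in turn.

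In case (i), $b$ is inertial. By definition $\mathcal{O}Gb$ and $\mathcal{O}{\rm N}_G(P)b_0$ are then basically Morita equivalent, and a Morita equivalence is in particular a derived equivalence.

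In case (iii), $b$ is basic Morita equivalent to the principal block $B$ of $H=A_1(2^a)\times R$, where $2^a-1$ is prime and $R$ is an abelian $2$-group. Basic Morita equivalences preserve defect groups and inertial quotients (Puig), so $B$ has defect group $Q=C_2^a\times R$ and $\mathbb{E}(B)\cong C_{2^a-1}$. The first step is to show that Broué's conjecture holds for $B$. Since ${\rm SL}_2(2^a)$ has abelian Sylow $2$-subgroups, this is classical: Okuyama, and Rouquier's results on ${\rm SL}_2(q)$, give a splendid derived equivalence between $B_0({\rm SL}_2(2^a))$ and $B_0(C_2^a\rtimes C_{2^a-1})$. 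Tensoring with $\mathcal{O}R$ yields a derived equivalence between $B$ and its Brauer correspondent $\mathcal{O}(Q\rtimes C_{2^a-1})$. The second step is to identify the local side. The block $b_0$ of ${\rm N}_G(P)$ has normal abelian defect group and inertial quotient $\mathbb{E}\cong C_{2^a-1}$ acting on $P$ as on $Q$. Since $H^2(C_{2^a-1},k^\times)=0$ for a cyclic group, Külshammer's theorem shows that $\mathcal{O}{\rm N}_G(P)b_0$ is Morita equivalent to $\mathcal{O}(P\rtimes\mathbb{E})\cong\mathcal{O}(Q\rtimes C_{2^a-1})$. Composing
$\mathcal{O}Gb\sim_{M}B\sim_{D}\mathcal{O}(Q\rtimes C_{2^a-1})\sim_{M}\mathcal{O}{\rm N}_G(P)b_0$
gives the required derived equivalence.

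In case (ii), the hyperfocal subgroup $[P,\mathbb{E}(b)]$ is a Klein four-group. Then $\mathbb{E}(b)$ acts faithfully on $V_4$ and has prime order, so $\mathbb{E}(b)\cong C_3$. Since $P$ is abelian and $|\mathbb{E}(b)|$ is odd, we have $P=[P,\mathbb{E}]\times C_P(\mathbb{E})\cong V_4\times C_P(\mathbb{E})$. This is the main obstacle: we need a Morita-type reduction for blocks whose hyperfocal subgroup is $V_4$. The plan is to invoke the known classification of blocks with hyperfocal subgroup $V_4$ (or the Watanabe/Puig-type reduction to the hyperfocal subalgebra). This shows that $b$ is basic Morita equivalent either to an inertial block, or to the principal block of $A_4\times C_P(\mathbb{E})$ or $A_5\times C_P(\mathbb{E})$. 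If it is inertial we are done as in case (i). For $A_4\times C$ the block is already its own Brauer correspondent up to Morita equivalence. For $A_5\times C$ we use Broué's conjecture for $B_0(A_5)$, which is known: Rickard gave a derived equivalence between $B_0(A_5)$ and $B_0(A_4)$. We tensor it with $\mathcal{O}C$ and then compose with the Külshammer identification of $b_0$, exactly as in case (iii). If no such classification is citable, we instead apply the $a=2$ instance of case (iii)'s argument, since $A_1(4)\cong A_5$, after showing that $b$ is Morita equivalent to $B_0(A_5)\otimes\mathcal{O}C$ or is inertial.
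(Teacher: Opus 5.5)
Your cases (i) and (iii) are fine. In (iii) you work out by hand what the paper obtains by citation, and the chain of equivalences you write down is correct. The ingredients are Puig's invariance of the fusion system under basic Morita equivalence, Külshammer's theorem together with $H^2(C_{2^a-1},k^\times)=0$, and the classical splendid equivalence for $B_0(\mathcal{O}{\rm SL}_2(2^a))$ tensored with $\mathcal{O}R$.

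The genuine gap is case (ii). You flag it yourself as the main obstacle and then do not resolve it. Your whole argument there rests on the claim that a block with abelian defect group and Klein four hyperfocal subgroup is basic Morita equivalent either to an inertial block or to $B_0(\mathcal{O}(A_5\times {\rm C}_P(\mathbb{E})))$. This must hold for an \emph{arbitrary} abelian $2$-group ${\rm C}_P(\mathbb{E})$. You give neither a proof nor a precise reference. Your fallback, ``after showing that $b$ is Morita equivalent to $B_0(A_5)\otimes\mathcal{O}C$ or is inertial'', merely restates the missing step.

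Nothing in the paper supplies this step. Lemma \ref{G is Klein} only transfers the \emph{hyperfocal subgroup} from a component block $b_X$ to $b$ and says nothing about the Morita type of $b$. Moreover, in the relevant situation $b_X$ is itself only Morita equivalent to a block of some $L_0\times L_1$. The passage from $b_X$ to $b$, through the rest of $P$ and the $2'$-automorphisms, is exactly where Morita type is not controlled.

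Puig's hyperfocal subalgebra theorem does not close the gap either. It presents the source algebra as a $P/[P,\mathbb{E}(b)]$-graded algebra over the hyperfocal subalgebra $D$. But you would still have to identify $D$ with a source algebra of $\mathcal{O}A_4$ or $B_0(\mathcal{O}A_5)$. You would also have to show that the grading is untwisted, i.e.\ that the source algebra is $D\otimes\mathcal{O}{\rm C}_P(\mathbb{E})$. Neither step is automatic.

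The paper never attempts a Morita classification in this case. Its proof of the corollary is just Theorem \ref{Main1} combined with \cite[Theorem 4.36]{CR} and \cite[Theorem 1.2]{H2025}, which between them supply the conjecture in the non-inertial cases (ii) and (iii). To repair your argument you must either cite a result establishing Broué's conjecture (or the claimed Morita classification) for blocks with abelian defect group and Klein four hyperfocal subgroup, or actually prove the classification you assert.
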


\section{Reduced blocks}
In this section, we consider reduced blocks with abelian defect groups.  In general, we follow the notation and
terminology of \cite{N1}

We recall that a block $b$ of $G$ is called \emph{quasi-primitive} if every block of a normal subgroup of $G$ covered by $b$ is $G$-invariant. Further, $b$ is said to be \emph{reduced} (see \cite[Proposition 6.1]{A}) if it is quasi-primitive and satisfies the following additional condition: for any normal subgroup $N \trianglelefteq G$ such that $b$ covers a nilpotent block of $N$, we have
\[
N \leq {\rm Z}(G){\rm O}_p(G) \quad \text{and} \quad {\rm O}_{p'}(N) \leq {\rm Z}(G) \cap [G,G],
\]
where ${\rm Z}(G)$ is the center of $G$, ${\rm O}_p(G)$ is the maximal normal $p$-subgroup, ${\rm O}_{p'}(N)$ is the maximal normal $p'$-subgroup of $N$, and $[G,G]$ is the commutator of $G$.

As usual, ${\rm E}(G)$ denotes the layer of $G$, ${\rm F}(G)$ the Fitting subgroup, and ${\rm F}^*(G) = {\rm F}(G){\rm E}(G)$ the generalized Fitting subgroup. Reduced blocks with abelian defect groups were characterized in \cite{ZZ1}, leading to the following structural classification:

\begin{thm}[{\cite[Theorem 2.1]{ZZ1}}]\label{Reduced-block}
Let $b$ be a block of $G$ with defect group $P$. If $b$ is reduced and $P$ is abelian, then one of the following holds:
\begin{enumerate}[itemsep=-3pt,topsep=7pt,label=\emph{(\roman*)}]
\item $P$ is normal in $G$.
\item The layer ${\rm E}(G)$ is non-trivial, and there exists normal subgroups $M$ and $L$ of $G$
such that ${\rm F}^*(G)\leq M\leq L$, such that $G/L$ and $M/{\rm F}^*(G)$ are both $p'$-groups, such that $L=PM$, and such that each component of $G$ is normal in $L$.
\end{enumerate}
\end{thm}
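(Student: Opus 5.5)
This statement is quoted from \cite[Theorem 2.1]{ZZ1}. Here is how I would argue it, assuming the standard consequences of reducedness and the Fong--Reynolds and K\"ulshammer--Puig reductions.

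First I would record what reducedness forces. Quasi-primitivity means every block of a normal subgroup $N$ covered by $b$ is $G$-invariant, so a defect group of the covered block can be taken to be $P\cap N$. Any normal subgroup whose covered block is nilpotent is contained in ${\rm Z}(G){\rm O}_p(G)$. Apply this to $N={\rm O}_{p'}(G)$: blocks of $p'$-groups have trivial defect and are nilpotent, so ${\rm O}_{p'}(G)\le {\rm Z}(G)$. Apply it to $N={\rm O}_p(G){\rm Z}(G)$: this subgroup is nilpotent, so its blocks are nilpotent and the condition is consistent. Hence ${\rm F}(G)={\rm O}_p(G){\rm Z}(G)$, and ${\rm F}(G)$ is central modulo ${\rm O}_p(G)$. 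Also ${\rm O}_p(G)\le P$, which is abelian.

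Next, suppose ${\rm E}(G)=1$. Then ${\rm F}^*(G)={\rm F}(G)={\rm O}_p(G){\rm Z}(G)$, so ${\rm C}_G({\rm F}^*(G))\le {\rm F}^*(G)$ gives ${\rm C}_G({\rm O}_p(G))\le {\rm O}_p(G){\rm Z}(G)$. Since $P$ is abelian and contains ${\rm O}_p(G)$, we have $P\le {\rm C}_G({\rm O}_p(G))\le {\rm O}_p(G){\rm Z}(G)$. A Sylow argument then puts $P$ inside ${\rm O}_p(G){\rm Z}(G)_p\le {\rm O}_p(G)$. So $P={\rm O}_p(G)$ is normal, which is case (i).

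Now suppose ${\rm E}(G)\neq 1$. I would take $M$ to be the preimage of ${\rm O}_{p'}(G/{\rm F}^*(G))$ and $L=PM$, and then check the following.
\begin{enumerate}[label=(\alph*)]
\item $L$ is normal of $p'$-index in $G$. This should follow from a Frattini-type argument. By Schreier's conjecture, $G/{\rm F}^*(G)$ embeds in ${\rm Out}({\rm F}^*(G))$, so its structure is controlled by permutations of the components and their outer automorphisms.
\item Each component is normal in $L$. Suppose $P$ moved a component $E$ nontrivially. The block of $E_1\cdots E_r$ (the product over the orbit) covered by $b$ would have a defect group containing a diagonal-type subgroup. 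Abelianness of $P$ and quasi-primitivity should force the $p$-part of the component blocks to be trivial. Then the blocks of the product are nilpotent, which contradicts ${\rm E}(G)\not\le {\rm Z}(G){\rm O}_p(G)$. So $P$ normalizes each component, and so does $M$, by its definition modulo the permutation action.
\end{enumerate}

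I expect step (b) to be the main obstacle. It is the most delicate part, and I would lean on the argument in \cite{ZZ1} for it.
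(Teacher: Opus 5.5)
The paper gives no proof of this statement; it quotes it from \cite{ZZ1}, so I am judging your argument on its own. Your case ${\rm E}(G)=1$ is fine. One small fix: to get ${\rm F}(G)={\rm O}_p(G){\rm Z}(G)$, apply reducedness to $N={\rm F}(G)$, whose blocks are nilpotent, not to ${\rm O}_p(G){\rm Z}(G)$. The genuine error is your choice of $M$ when ${\rm E}(G)\neq 1$. The preimage of ${\rm O}_{p'}(G/{\rm F}^*(G))$ need not normalize the components, because $p'$-elements can permute them. So your sentence ``and so does $M$, by its definition modulo the permutation action'' is false. For example, take $p=2$ and $G=(A_5\times A_5\times A_5)\rtimes C_3$, with $C_3$ permuting the factors cyclically, and let $b$ be the principal block. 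The only normal subgroups of $G$ are $1$, $A_5^3$ and $G$, so $b$ is reduced. Here $P\cong C_2^6$ is abelian and not normal, and ${\rm F}^*(G)=A_5^3$. Your recipe gives $M=L=G$, in which no component is normal, whereas (ii) holds with $M=L={\rm F}^*(G)$. You must work inside the kernel $K$ of the action of $G$ on the set of components: after proving $P\le K$, set $M/{\rm F}^*(G)={\rm O}_{p'}(K/{\rm F}^*(G))$.

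Step (a) is asserted rather than proved, and it carries most of the content. Three ingredients are missing.
\begin{enumerate}[label=(\arabic*)]
\item $P{\rm F}^*(G)/{\rm F}^*(G)$ is a Sylow $p$-subgroup of $G/{\rm F}^*(G)$. This is a block-theoretic fact, using quasi-primitivity, ${\rm C}_G({\rm F}^*(G))\le{\rm F}^*(G)$ and the abelianness of $P$; the paper invokes \cite[Lemma 2.4]{CGA} for it in the proof of Lemma~\ref{Iso}. In general a defect group need not cover a Sylow subgroup modulo a normal subgroup, so without this nothing stops $p$ from dividing $|G:L|$.
\item $G/{\rm F}^*(G)$ is $p$-solvable. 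The embedding of $G/{\rm F}^*(G)$ into ${\rm Out}({\rm F}^*(G))$ comes from ${\rm C}_G({\rm F}^*(G))\le{\rm F}^*(G)$, not from Schreier. Schreier only shows that $(K\cap{\rm C}_G({\rm O}_p(G)))/{\rm F}^*(G)$, which embeds in $\prod_i{\rm Out}(X_i)$, is solvable. The permutation action on components and the action on ${\rm O}_p(G)$ can be non-solvable. They are harmless only because (1), together with $P\le K\cap{\rm C}_G({\rm O}_p(G))$, makes that index prime to $p$.
\item A $p$-solvable group with abelian Sylow $p$-subgroups has $p$-length at most $1$, by Hall--Higman (${\rm C}({\rm O}_p)\le{\rm O}_p$ modulo ${\rm O}_{p'}$). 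This, and not the shape of ${\rm Out}(X)$, produces the $p'$-$p$-$p'$ series. Indeed ${\rm Out}(P\Omega_8^+(q))$ for $q$ odd contains $S_4$, which has $2$-length $2$.
\end{enumerate}

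Finally, in (b) the mechanism is not a diagonal subgroup. Suppose $x\in P$ moves a component $X$. Since $P$ is abelian, $P\cap X=(P\cap X)^x\le X\cap X^x\le{\rm Z}({\rm E}(G))$. So $b_X$ has a \emph{central}, not necessarily trivial, defect group. Then the block of the normal closure of $X$ has central defect group \cite[Corollary 2.9]{MNST}, hence is nilpotent, which contradicts reducedness. This is exactly the argument in the proof of Corollary~\ref{X_i is normal}.
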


 A block $b$ of $G$  is \textit{nilpotent covered} if there is a finite group $\tilde{G}$ with the normal subgroup $G$ and a nilpotent block $\tilde{b}$ of $\tilde{G}$ covering $b$ (i.e. $\tilde{b}\cdot b\neq0$).
 A subnormal quasi-simple subgroup of a finite group $G$ is called a $\textit{component}$ of $G$. The layer ${\rm E}(G)$ is the central product of all components of $G$.

 Let $P$ be a defect group of $b$.
 In the remainder of this section, we always assume that $b$ is reduced, and that $P$ is abelian which is not normal in $G$. For any component $X$ of $G$, let $b_X$ be a block of $X$ such that $b_X\cdot b\neq0$. Let $b_{{\rm E}(G)}$ be the  block of ${\rm E}(G)$ covered by $b$. It is obvious that $b_X$ is the unique block of $X$ covered by $b_{{\rm E}(G)}$.

\begin{prop}\label{Inertial}
Keep the notation and assumptions above.  If the block $b_X$ is nilpotent covered for every component $X$ of $G$,
then $b$ is inertial.
\end{prop}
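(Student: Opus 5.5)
We work in the setting of Theorem \ref{Reduced-block}(ii): $b$ is reduced, $P$ is abelian and not normal in $G$. So ${\rm E}(G)\neq 1$, and there are normal subgroups ${\rm F}^*(G)\le M\le L$ with $L=PM$, with $G/L$ and $M/{\rm F}^*(G)$ being $p'$-groups, and with every component normal in $L$. The plan is to show first that $b_{{\rm E}(G)}$ is inertial in a strong, equivariant sense, and then to lift this through ${\rm F}^*(G)$, $M$, $L$ and $G$. For the lifting we use the known fact (Puig, \cite{P2011}) that inertiality is preserved under the passage between a normal subgroup of $p'$-index and the group, and also when $b$ covers a block with the appropriate extension properties.

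\emph{Step 1 (the layer).} Write ${\rm E}(G)=X_1*\cdots*X_n$ as a central product of components. By assumption each $b_{X_i}$ is nilpotent covered, and the key ingredient is Puig's theorem \cite{P2011}: a nilpotent covered block is inertial. Indeed, it is basically Morita equivalent to its Brauer correspondent, via a source algebra of the form $S\otimes \mathcal{O}_*(Q_i\rtimes E_i)$ with $S$ a Dade $P$-algebra. The block $b_{{\rm E}(G)}$ is a quotient of the tensor product $\bigotimes_i b_{X_i}$ over a central $p'$-part, or more generally over a central subgroup. We check that tensor products of nilpotent covered blocks are nilpotent covered: take $\prod\tilde X_i$ with the nilpotent block $\bigotimes\tilde b_i$, then pass to the central quotient. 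Hence $b_{{\rm E}(G)}$ is nilpotent covered and therefore inertial.

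\emph{Step 2 (the Fitting subgroup and ${\rm F}^*(G)$).} Since $b$ is reduced, ${\rm O}_{p'}(G)\le{\rm Z}(G)$ and ${\rm O}_p(G)\le P$. Hence ${\rm F}(G)={\rm Z}(G){\rm O}_p(G)$ up to central elements. Then ${\rm F}^*(G)={\rm E}(G){\rm F}(G)$ is a central product of ${\rm E}(G)$ with an abelian group whose $p$-part lies in $P$. The block of ${\rm F}^*(G)$ covered by $b$ is then a central product of $b_{{\rm E}(G)}$ with a block of an abelian group, and it remains nilpotent covered. Here the covering group is ${\rm F}^*(G)$ extended by a suitable $\tilde{\rm E}$ for the layer part.

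\emph{Step 3 (going up to $G$).} The main obstacle is to go from ${\rm F}^*(G)$ to $G$ through the $p$-extension $L=PM$; the $p'$-extensions $M/{\rm F}^*(G)$ and $G/L$ are not the problem. Note that a nilpotent covered block of a normal subgroup need not stay nilpotent covered in a $p$-extension unless the covering group can be chosen compatibly. The tool here is Puig's result \cite{P2011} that if $N\trianglelefteq H$ and $b$ covers a nilpotent covered, $H$-stable block $c$ of $N$ (with $H/N$ arbitrary), then $b$ is inertial. One form of this is Puig's theorem that blocks covering nilpotent-covered blocks of normal subgroups with $p'$-quotient are inertial, and Zhou's extension \cite{Z} handles the general quotient. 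So the plan is:
\begin{enumerate}[label=(\alph*)]
\item Choose $\tilde{G}$ as the semidirect/central construction from Step 2 made $G$-equivariant. Since each component is normal in $L$, $L$ acts on each $X_i$, and we realize $L$ inside a group containing the nilpotent covers $\tilde X_i$. Here we use the nilpotency of $b_{X_i}$'s covering block and the fact that $P$ is abelian, so that $L/{\rm F}^*(G)$ being a $p$-group-by-$p'$ embeds into the automorphism action.
\item Deduce that the block of $L$ covered by $b$ is nilpotent covered, or at least inertial.
\item Apply the $p'$-index result for $G/L$, which is cleanest since inertiality is preserved by Külshammer–Puig / Puig's $p'$-extension theorem, to conclude that $b$ is inertial.
\end{enumerate}
If (a) fails in general, we fall back on applying directly Puig's criterion that ``covering a nilpotent covered stable block'' suffices. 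The $G$-stability comes from quasi-primitivity of $b$.
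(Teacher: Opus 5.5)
There is a genuine gap in Step 3, which is where all the difficulty lies. The fallback criterion you attribute to \cite{P2011} is false. It says that if $N\trianglelefteq H$ and $b$ covers an $H$-stable nilpotent covered block of $N$, with $H/N$ arbitrary, then $b$ is inertial. With $N=1$ the hypothesis always holds, so every block would be inertial. But the principal $2$-block of $A_5$ is not inertial: its Cartan matrix has diagonal entries $4,2,2$, while that of $kA_4$ has $2,2,2$.

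What \cite{P2011} gives is that a block which is \emph{itself} nilpotent covered is inertial, plus $p'$-extension statements for nilpotent blocks. So you would need a nilpotent block of some overgroup of $L=PM$ (or of $G$) covering the relevant block. Your item (a) gives no construction of one. The covers $\tilde X_i$ are chosen independently, and nothing guarantees they can be glued compatibly with the action of $P$ on the components and with the rest of $G$; you yourself allow that (a) may fail. Also, \cite{Z} does not extend Puig's theorem to arbitrary quotients; it is the result on blocks whose irreducible Brauer characters form a single orbit. Finally, your $p'$-steps would need a precise result on $p'$-extensions of inertial (not nilpotent) blocks, which you do not supply.

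The paper never builds a global nilpotent cover. Each component $X$ is normal in $L\geq P$, so $b_X$ can be viewed as a block of $PX$ with defect group $P$. Since $b_X$ is nilpotent covered, $\IBr(b_X)$ is a single $\Aut(X)_{b_X}$-orbit: the covering nilpotent block has only one Brauer character, and Clifford theory applies. Then \cite[Theorem 1.1]{Z} makes this block of $PX$ inertial. The local-to-global step is \cite[Corollary 2.4]{ZZ1}. For a reduced block with abelian defect group in case (ii) of Theorem \ref{Reduced-block}, it deduces inertiality of $b$ from inertiality of these blocks of $PX$, one for each component $X$. This assembly result, or a genuine substitute for it, is the missing idea in your plan. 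Your Steps 1--2 on ${\rm E}(G)$ and ${\rm F}^*(G)$ never handle $P$ acting on the components, which is exactly what passing to $PX$ and \cite{ZZ1} take care of.
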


\begin{proof}
By Theorem \ref{Reduced-block}(ii), $X$ is invariant under the conjugation action of $P$. So the block $b_X$ can be viewed as a block of $PX$,
and $P$ serves as a defect group of it. Since $b_X$ is nilpotent covered, the set $\IBr(b_X)$, which denotes the set of all irreducible Brauer characters belonging  to $b_X$, forms a single orbit under the action of $\Aut(X)_{b_X}$, where $\Aut(X)_{b_X}$ is the stabilizer of $b_X$ in the automorphism group $\Aut(X)$ under conjugation. Thus by \cite[Theorem 1.1]{Z},  $b_X$ is inertial as a block of $PX$.
Finally, by \cite[Corollary 2.4]{ZZ1} and the arbitrariness of $X$, it follows that $b$ is inertial.
\end{proof}

\begin{lem}\label{G=N}
Keep the notation and assumptions above. Let $N\unlhd G$ such that ${\rm C}_G(P)\leq N$. If the inertial quotient $\mathbb{E}(b)$ has prime order, then $G=N$.
\end{lem}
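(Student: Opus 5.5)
Since $b$ is reduced, it is quasi-primitive, so $b$ covers a unique block $c$ of $N$, and $c$ is $G$-stable. The approach is to show that $G/N$ is trivial by comparing inertial quotients.

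\textbf{Step 1: Frattini argument.} Because ${\rm C}_G(P)\le N$ and $c$ is $G$-stable, a Frattini-type argument gives $G=N\,{\rm N}_G(P,e_P)$. Concretely, the maximal $c$-Brauer pairs are $N$-conjugate and $G$ permutes them, while ${\rm C}_G(P)\le N$ makes $e_P$ a block of ${\rm C}_N(P)={\rm C}_G(P)$.

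\textbf{Step 2: $P$ is a defect group of $c$.} Since $G/N$ acts on $c$, a defect group $Q$ of $c$ satisfies $Q=P\cap N$ up to conjugacy. Every element of $P$ centralizes $P$, so $P\le {\rm C}_G(P)\le N$, hence $Q=P$ and $(P,e_P)$ is a maximal $c$-Brauer pair. It follows that $\mathbb{E}(c)={\rm N}_N(P,e_P)/{\rm C}_G(P)$ is a subgroup of $\mathbb{E}(b)$. Combined with Step 1, this gives
\[
G/N\cong {\rm N}_G(P,e_P)/{\rm N}_N(P,e_P)\cong \mathbb{E}(b)/\mathbb{E}(c).
\]
Since $|\mathbb{E}(b)|$ is prime, either $\mathbb{E}(c)=\mathbb{E}(b)$, giving $G=N$, or $\mathbb{E}(c)=1$.

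\textbf{Step 3: ruling out $\mathbb{E}(c)=1$ (main obstacle).} Suppose $\mathbb{E}(c)=1$. Since $P$ is abelian, $c$ is then a nilpotent block of $N$: its fusion system is the one of $P$, by Alperin's fusion theorem for abelian defect groups, or by Burnside/Puig. Reducedness now gives $N\le {\rm Z}(G){\rm O}_p(G)$. But $P$ is a defect group of $b$, so ${\rm O}_p(G)\le P$, and hence $P\le N\le {\rm Z}(G)P$. Then $P={\rm O}_p(N)$ is characteristic in $N\unlhd G$, so $P\unlhd G$. This contradicts the standing assumption that $P$ is not normal in $G$.

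Therefore $\mathbb{E}(c)=\mathbb{E}(b)$ and $G=N$. The delicate points are the Frattini argument and the nilpotency of $c$ when $\mathbb{E}(c)=1$; both should be standard for abelian defect groups.
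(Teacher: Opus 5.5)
Your proposal is correct and follows essentially the same route as the paper: $P\le {\rm C}_G(P)\le N$ makes $(P,e_P)$ a common maximal Brauer pair for $b$ and the $G$-stable block $c$, so $\mathbb{E}(c)\le\mathbb{E}(b)$; the case $\mathbb{E}(c)=1$ forces $c$ to be nilpotent and hence $N\le {\rm Z}(G){\rm O}_p(G)$, contradicting non-normality of $P$; and the Frattini argument $G=N\,{\rm N}_G(P,e_P)$ finishes. The differences are cosmetic: you invoke the Frattini argument before establishing in Step 2 that $(P,e_P)$ is a maximal $c$-Brauer pair, and you reach the contradiction via $P={\rm O}_p(N)$ instead of the paper's $P={\rm O}_p(G)$.
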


\begin{proof}
Let $c$ be a block of $N$ covered by the block $b$. Since $P$ is abelian and ${\rm C}_G(P)\leq N$, we have $P\leq N$.
Since $b$ is reduced, $c$ is $G$-invariant. Thus \cite[Theorem 9.26]{N1} tells us that $P$ is also a defect group of  $c$.

Fix a common maximal Brauer pair $(P,e_P)$ for $b$ and $c$. Note that the inertial quotient $\mathbb{E}(c)={\rm N}_N(P,e_P)/{\rm C}_N(P)=({\rm N}_G(P,e_P)\cap N)/{\rm C}_G(P)$. So $\mathbb{E}(c)$ is a subgroup of  $\mathbb{E}(b)$. Since $\mathbb{E}(b)$ has prime order, $\mathbb{E}(c)$ is either trivial or equal to $\mathbb{E}(b)$. Suppose that $\mathbb{E}(c)$ is  trivial. Then $c$ is nilpotent  by \cite[Proposition 8.11.3]{L}.
Since $b$ is reduced, we have $N\leq {\rm Z}(G){\rm O}_p(G)$. Thus $P={\rm O}_p(G)$, which contradicts
our assumption that $P$ is not normal in $G$. So $\mathbb{E}(c)$ is equal to $\mathbb{E}(b)$, and then ${\rm N}_G(P,e_P)\leq N$. Also, the Frattini argument tells us that $G=N{\rm N}_G(P,e_P)$, so $G=N$.
\end{proof}

\begin{cor}\label{ZG}
Keep the notation and assumptions above. If the inertial quotient $\mathbb{E}(b)$ has prime order, then ${\rm Z}(G) = {\rm O}_p(G){\rm O}_{p'}(G)$.
\end{cor}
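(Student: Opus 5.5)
We want to show ${\rm Z}(G)={\rm O}_p(G){\rm O}_{p'}(G)$, assuming $b$ is reduced, $P$ abelian and not normal in $G$, and $\mathbb{E}(b)$ of prime order. The inclusion ${\rm Z}(G)\le {\rm O}_p(G){\rm O}_{p'}(G)$ is immediate: ${\rm Z}(G)$ is abelian, hence nilpotent, so ${\rm Z}(G)={\rm O}_p({\rm Z}(G))\times{\rm O}_{p'}({\rm Z}(G))$, and these two factors are characteristic in a normal subgroup. For the reverse inclusion it suffices to prove ${\rm O}_p(G)\le{\rm Z}(G)$ and ${\rm O}_{p'}(G)\le{\rm Z}(G)$ separately. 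In both cases the plan is to take $N$ to be the normal subgroup in question and show that the block of $N$ covered by $b$ is nilpotent. The reducedness hypothesis then forces $N$ to be small.

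First take $N={\rm O}_{p'}(G)$. Every block of a $p'$-group has trivial defect group and is therefore nilpotent. So $b$ covers a nilpotent block of $N$. Reducedness gives ${\rm O}_{p'}(N)\le{\rm Z}(G)\cap[G,G]$, and since ${\rm O}_{p'}(N)=N$, we get ${\rm O}_{p'}(G)\le{\rm Z}(G)$.

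Next take $N={\rm O}_p(G)$. The unique block of a $p$-group is nilpotent, so reducedness gives ${\rm O}_p(G)\le{\rm Z}(G){\rm O}_p(G)$, which carries no information. A different argument is needed, and this is where Lemma \ref{G=N} comes in. Consider $H={\rm C}_G({\rm O}_p(G))$, which is normal in $G$. Since ${\rm O}_p(G)$ is contained in every defect group of $b$, we have ${\rm O}_p(G)\le P$, and because $P$ is abelian, ${\rm C}_G(P)\le {\rm C}_G({\rm O}_p(G))=H$. Lemma \ref{G=N} then gives $G=H$, so ${\rm O}_p(G)$ is central. Combining this with the previous step yields ${\rm O}_p(G){\rm O}_{p'}(G)\le{\rm Z}(G)$.

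The main point to verify is the use of Lemma \ref{G=N}, namely that $H$ is normal and contains ${\rm C}_G(P)$. Normality holds because ${\rm O}_p(G)$ is characteristic in $G$. The containment of ${\rm O}_p(G)$ in $P$ is the standard fact \cite[Theorem 4.8]{N1}. Everything else is formal.
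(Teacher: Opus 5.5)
Your proof is correct and follows the paper's argument. Reducedness gives ${\rm O}_{p'}(G)\le{\rm Z}(G)$, and Lemma \ref{G=N} applied to ${\rm C}_G({\rm O}_p(G))\supseteq{\rm C}_G(P)$ gives ${\rm O}_p(G)\le{\rm Z}(G)$. You also spell out the easy inclusion ${\rm Z}(G)\le{\rm O}_p(G){\rm O}_{p'}(G)$, which the paper leaves implicit (note that ${\rm C}_G(P)\le{\rm C}_G({\rm O}_p(G))$ needs only ${\rm O}_p(G)\le P$, not that $P$ is abelian).
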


\begin{proof}
As $b$ is reduced, ${\rm O}_{p'}(G) \leq {\rm Z}(G)$.
So we may assume that ${\rm O}_p(G) \neq 1$. Let $N={\rm C}_G({\rm O}_p(G))$. Since any defect group of $b$ contains ${\rm O}_p(G)$, then $N$ is normal in $G$ and contains $C_G(P)$.  By Lemma \ref{G=N}, we have $G = N$, so ${\rm O}_p(G) \leq {\rm Z}(G)$.
It follows that
\(
\mathrm{Z}(G) = \mathrm{O}_p(G) \, \mathrm{O}_{p'}(G).
\)
\end{proof}

\begin{cor}\label{X_i is normal}
Keep the notation and assumptions above. If the inertial quotient $\mathbb{E}(b)$ has prime order, then every component of  $G$ is normal in $G$.
\end{cor}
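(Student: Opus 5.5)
Since the components of $G$ are permuted by conjugation, the natural move is to apply Lemma \ref{G=N} to the kernel of this permutation action. Let $X_1,\dots,X_t$ be the components of $G$. By Theorem \ref{Reduced-block}(ii) there are normal subgroups ${\rm F}^*(G)\le M\le L$ with $L=PM$, and every component is normal in $L$. Let
\[
N=\bigcap_{i=1}^t {\rm N}_G(X_i),
\]
the kernel of the action of $G$ on $\{X_1,\dots,X_t\}$. Since $G$ permutes its components, $N\unlhd G$, and $L\le N$ because each $X_i$ is normal in $L$. It then suffices to show ${\rm C}_G(P)\le N$, since Lemma \ref{G=N} will give $G=N$, i.e.\ every component is normal in $G$.

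To show ${\rm C}_G(P)\le N$, take $g\in{\rm C}_G(P)$ and a component $X$. We have $P\le L\le {\rm N}_G(X)$, so $P$ normalizes both $X$ and $X^g$. The difficulty is to rule out $X^g\ne X$.

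The plan for this step is to use the block and its defect group. Suppose $X^g=Y\neq X$. Then $[X,Y]=1$ and $b_Y=b_X^g$, because $b_{{\rm E}(G)}$ is $G$-invariant by quasi-primitivity, so the blocks of the components it covers are permuted compatibly. Consider $D_X=P\cap X$, which should be a defect group of $b_X$ (or at least related to one) via $L$ and the $P$-stable components. Since $g$ centralizes $P$, it maps $D_X$ to $P\cap Y$ identically, so $D_X\le X\cap Y\le {\rm Z}(X)$. Thus the defect group of $b_X$ would be central in $X$, which makes $b_X$ nilpotent. Reducedness excludes nilpotent blocks covered on the normal subgroup ${\rm E}(G)$-type pieces beyond ${\rm Z}(G){\rm O}_p(G)$, giving a contradiction.

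A more robust alternative avoids the defect-group bookkeeping. Instead of arguing component by component, I would let $N$ be the normal closure of ${\rm C}_G(P)$ directly. Then Lemma \ref{G=N} says $G$ is generated by the conjugates of ${\rm C}_G(P)$, so it suffices to show ${\rm C}_G(P)$ normalizes each component. To handle the case where $b_X$ has defect-group intersection $P\cap X$ with $X\cap P\le{\rm Z}(X)$, I would invoke the standard fact that such a block of a quasi-simple group is nilpotent. The normal subgroup generated by that $G$-orbit of components then covers a nilpotent block, contradicting reducedness, since a nonabelian component cannot lie in ${\rm Z}(G){\rm O}_p(G)$. I expect the main obstacle to be justifying that $P\cap X$ is a defect group of $b_X$ and that $g$ acting trivially on it forces it into $X\cap X^g$.
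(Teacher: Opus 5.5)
This is essentially the paper's argument. Both proofs apply Lemma \ref{G=N} to a normal subgroup containing ${\rm C}_G(P)$. Both then observe that an element $g$ centralizing $P\cap X$ which moves $X$ forces $P\cap X\le X\cap X^g\le{\rm Z}(X)$, and that reducedness forbids this.

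The packaging differs slightly:
\begin{itemize}
\item The paper applies Lemma \ref{G=N} to $K={\rm E}(G){\rm C}_G(P\cap{\rm E}(G))$, which needs a Frattini argument to be normal, and then shows that ${\rm C}_G(P\cap{\rm E}(G))$ normalizes every component.
\item Your kernel $N=\bigcap_i{\rm N}_G(X_i)$ is normal for free, and only ${\rm C}_G(P)$ has to be checked, so your version is a little leaner.
\item Your ``alternative'' via the normal closure of ${\rm C}_G(P)$ is the same argument, since $N$ contains ${\rm C}_G(P)$ if and only if it contains its normal closure.
\item Your remark that ${\rm Z}(G){\rm O}_p(G)$ is nilpotent, and so contains no component, makes the appeal to Corollary \ref{ZG} at the end unnecessary.
\end{itemize}

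Two steps still have to be supplied.

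First, the fact that $P\cap X$ is a defect group of $b_X$ does not come from $L$. By quasi-primitivity, $b_{{\rm E}(G)}$ is $G$-invariant, so $P\cap{\rm E}(G)$ is a defect group of it by \cite[Theorem 9.26]{N1}. Moreover, $b_X$ is ${\rm E}(G)$-invariant, because ${\rm E}(G)=X\,{\rm C}_{{\rm E}(G)}(X)$. Applying the same theorem inside ${\rm E}(G)$ then gives that $P\cap X$ is a defect group of $b_X$.

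Second, and more substantively, the contradiction in your first version does not work as written. In precisely the case you are excluding, $X$ is not normal in $G$, so reducedness says nothing about $b_X$ itself. You must pass, as in your alternative, to $H=\langle X^G\rangle$, the central product of the $G$-conjugates of $X$. You then need the block of $H$ covered by $b$ to be nilpotent, and you assert this without proof.

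Knowing that each $b_{X^h}=b_X^h$ has a central defect group gives this only after an extra argument. The paper cites \cite[Corollary 2.9]{MNST}. Alternatively, the block of the central product $H$ is dominated by the block $\bigotimes b_{X^h}$ of the direct product of the distinct conjugates $X^h$. That block has the central defect group $\prod (P\cap X^h)$. A block of a quotient dominated by it has a defect group inside the image, which is central in $H$, so the block is nilpotent. With these two points filled in, your proof is complete.
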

\begin{proof}
Since $\mathrm{E}(G)$ is normal in $G$ and the block $b$ is reduced, the block of $\mathrm{E}(G)$ covered by $b$ is unique; denote it by $b_{\mathrm{E}(G)}$. Set $P_{\mathrm{E}(G)} = P \cap \mathrm{E}(G)$. By \cite[Theorem 9.26]{N1}, $P_{\mathrm{E}(G)}$ is a defect group of $b_{\mathrm{E}(G)}$. Let $(P_{\mathrm{E}(G)}, f_{P_{\mathrm{E}(G)}})$ be a maximal $b_{\mathrm{E}(G)}$-Brauer pair. The Frattini argument then implies
\[
G = \mathrm{E}(G) \, \mathrm{N}_G(P_{\mathrm{E}(G)}, f_{P_{\mathrm{E}(G)}}).
\]
Let $K = \mathrm{E}(G) \, \mathrm{C}_G(P_{\mathrm{E}(G)})$. Then $K$ is normal in $G$. Clearly, $\mathrm{C}_G(P) \leq \mathrm{C}_G(P_{\mathrm{E}(G)}) \leq K$. Applying Lemma \ref{G=N} to $K$ yields $G = K$.

Let $X$ be a component of $G$ and let $b_X$ be the block of $X$ covered by $b_{\mathrm{E}(G)}$. By \cite[Theorem 9.26]{N1}, $P_X = P_{\mathrm{E}(G)} \cap X$ is a defect group of $b_X$. Take an arbitrary element $g \in \mathrm{C}_G(P_{\mathrm{E}(G)})$. Then $P_X \leq X \cap X^g$.

Suppose $X \neq X^g$. Then $P_X \leq \mathrm{Z}(X)$, so $b_X$ has a central defect group. Hence, the block of every $G$-conjugate of $X$ covered by $b_{\mathrm{E}(G)}$ also has a central defect group. Let $H$ be the minimal normal subgroup of $G$ containing $X$, which is a central product of all $G$-conjugates of $X$. Then, by \cite[Corollary 2.9]{MNST}, the block of $H$ covered by $b$ has a central defect group.
By Corollary \ref{X_i is normal}, ${\rm O}_p(G){\rm O}_{p'}(G)=\mathrm{Z}(G).$
Since $b$ is reduced, it follows that $X\leq H \leq{\rm O}_p(G){\rm O}_{p'}(G)=\mathrm{Z}(G)$, which is a contradiction. Therefore, $X^g = X$ for all $g \in \mathrm{C}_G(P_{\mathrm{E}(G)})$, and since $G = \mathrm{E}(G) \mathrm{C}_G(P_{\mathrm{E}(G)})$,  $X$ is normal in $G$.
\end{proof}

\begin{lem}\label{NGPeP}
Keep the notation and assumptions above. Let $X$ be a component of $G$ and $b_X$ the block of $X$ covered by $b$. Let $(P,e_P)$ be a maximal $b$-Brauer pair. If $b_X$ is non-nilpotent covered and the inertial quotient $\mathbb{E}(b)$ has prime order, then
$
\mathrm{N}_G(P,e_P) = \mathrm{N}_X(P,e_P) \mathrm{C}_G(P).
$
\end{lem}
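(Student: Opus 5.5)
Since $\mathbb{E}(b)={\rm N}_G(P,e_P)/{\rm C}_G(P)$ has prime order, and ${\rm N}_X(P,e_P){\rm C}_G(P)$ is a subgroup of ${\rm N}_G(P,e_P)$ containing ${\rm C}_G(P)$, it suffices to show that ${\rm N}_X(P,e_P)\not\le {\rm C}_G(P)$. In other words, I will show that some element of $X$ stabilizing $(P,e_P)$ acts nontrivially on $P$. The order of ${\rm N}_X(P,e_P){\rm C}_G(P)/{\rm C}_G(P)$ then divides a prime and is not $1$, which forces equality.

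To set this up, I use Corollary \ref{X_i is normal}: $X\unlhd G$, so $P_X=P\cap X$ is a defect group of $b_X$ by \cite[Theorem 9.26]{N1}. Since $b$ is reduced, $b_X$ is $G$-invariant. I would pass to $L=PX$ (normal in $G$ by Corollary \ref{X_i is normal}, with $P$ normalizing $X$), and to the block $b_L$ of $L$ covered by $b$, which has defect group $P$. Choose a maximal $b_L$-Brauer pair compatible with $(P,e_P)$; its inertial quotient $\mathbb{E}(b_L)=({\rm N}_G(P,e_P)\cap L)/{\rm C}_L(P)$ embeds in $\mathbb{E}(b)$. This uses the same argument as in Lemma \ref{G=N}, possibly after replacing $e_P$ by a block of ${\rm C}_L(P)$ covered by it.

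The key step is to show that $\mathbb{E}(b_L)$ is nontrivial. Suppose it were trivial. Then $b_L$ is nilpotent by \cite[Proposition 8.11.3]{L}, and $b_L$ covers $b_X$. Since $X\unlhd L$, the definition of nilpotent covered (with $\tilde G=L$) shows that $b_X$ is nilpotent covered, contradicting the hypothesis. Hence there is $g\in {\rm N}_L(P,e_P)$ acting nontrivially on $P$. Since $P$ is abelian, $P\le {\rm C}_G(P)$. Writing $g=ux$ with $u\in P$ and $x\in X$, we get $x=u^{-1}g$, which normalizes $P$ and acts on $P$ exactly as $g$ does, hence nontrivially.

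The main obstacle is checking that $x$ stabilizes $e_P$ itself, not just $P$. Since $u\in P\le{\rm C}_G(P)$ and $u$ lies in the defect group, $u$ stabilizes $e_P$, because $e_P$ is a block of ${\rm C}_G(P)$ and is therefore fixed by conjugation by ${\rm C}_G(P)$. Hence $x\in{\rm N}_X(P,e_P)\setminus{\rm C}_G(P)$. A subtler point is the compatibility of Brauer pairs between $L$ and $G$: the pair for $b_L$ lives in ${\rm C}_L(P)$ rather than ${\rm C}_G(P)$. I would handle this by noting that $e_P$ covers a unique ${\rm N}_G(P,e_P)$-stable block of ${\rm C}_L(P)$, so that ${\rm N}_G(P,e_P)\cap L$ stabilizes the corresponding $b_L$-pair. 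If that inclusion is only one-directional, I would instead argue directly that ${\rm N}_L(P,f)$ acts nontrivially and use that ${\rm N}_G(P,f)={\rm N}_G(P,e_P){\rm C}_G(P)$-type Frattini arguments to transfer the element.
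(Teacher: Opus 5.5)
Your reduction to showing $\mathrm{N}_X(P,e_P)\not\le \mathrm{C}_G(P)$ and your use of non-nilpotent-coveredness are the same ideas the paper uses. The gap is your choice of intermediate subgroup $L=PX$.

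First, Corollary \ref{X_i is normal} gives $X\unlhd G$, not $PX\unlhd G$. For instance, $PX$ fails to be normal whenever another component $Y$ has $P\cap Y\not\le \mathrm{Z}(Y)$. So ``the block of $L$ covered by $b$'' is not available as stated. More seriously, $\mathrm{C}_L(P)=P\,\mathrm{C}_X(P)$ is in general a proper normal subgroup of $\mathrm{C}_G(P)$. A maximal $b_L$-Brauer pair is therefore $(P,f)$ with $f$ a block of $\mathrm{C}_L(P)$, and $\mathbb{E}(b_L)=\mathrm{N}_L(P,f)/\mathrm{C}_L(P)$, not $(\mathrm{N}_G(P,e_P)\cap L)/\mathrm{C}_L(P)$. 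The argument of Lemma \ref{G=N} does not transfer, since it uses $\mathrm{C}_G(P)\le N$.

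To finish, you would need that an element of $\mathrm{N}_X(P,f)$ acting nontrivially on $P$ also fixes $e_P$, i.e.\ essentially $\mathrm{N}_X(P,f)\le \mathrm{N}_G(P,e_P)$. Your sketched justification gives only the opposite inclusion $\mathrm{N}_G(P,e_P)\cap L\le \mathrm{N}_L(P,f)$. Even that rests on an unjustified uniqueness claim: $e_P$ covers a whole $\mathrm{C}_G(P)$-orbit of blocks of $\mathrm{C}_L(P)$. The Frattini argument likewise runs from $e_P$ to $f$: it gives $\mathrm{N}_G(P,e_P)=\mathrm{C}_G(P)\,(\mathrm{N}_G(P,e_P)\cap \mathrm{N}_G(P,f))$. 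Your proposed identity $\mathrm{N}_G(P,f)=\mathrm{N}_G(P,e_P)\mathrm{C}_G(P)$ is not true in general; the right side is just $\mathrm{N}_G(P,e_P)$, and the left side need not even contain $\mathrm{C}_G(P)$.

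Nothing in your argument stops $x\in\mathrm{N}_X(P,f)$ from permuting the blocks of $\mathrm{C}_G(P)$ lying over $f$. Such an $x$ acts trivially on $\mathrm{C}_G(P)/\mathrm{C}_X(P)$, but that is not enough. If, say, $\mathrm{C}_G(P)=\mathrm{C}_X(P)\times K$, then $[K,x]\le\mathrm{Z}(\mathrm{C}_X(P))$, and conjugation by $x$ twists the $K$-part of a block by a linear character of $K$. This can move $e_P$ while fixing $f$.

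The fix is to take $H=X\,\mathrm{C}_G(P)$ instead, as the paper does; $H$ is a subgroup, and its normality in $G$ is never needed. Then $\mathrm{C}_H(P)=\mathrm{C}_G(P)$, so $(P,e_P)$ itself is a maximal Brauer pair of $b_H=(e_P)^H$. Hence $\mathbb{E}(b_H)=(\mathrm{N}_G(P,e_P)\cap H)/\mathrm{C}_G(P)\le\mathbb{E}(b)$ with no compatibility issue. Since $X\unlhd H$ and $b_H$ covers $b_X$, non-nilpotent-coveredness forces $\mathbb{E}(b_H)\neq 1$. Primality then gives $\mathrm{N}_G(P,e_P)\le H$. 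Dedekind's law finishes the proof; equivalently, use your decomposition $g=xc$ with $c\in\mathrm{C}_G(P)$, which fixes $e_P$ automatically.
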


\begin{proof}
Let $H = X \mathrm{C}_G(P)$. By Corollary \ref{X_i is normal}, $X$ is normal in $H$, and clearly $\mathrm{C}_H(P) = \mathrm{C}_G(P)$. Let $b_H$ be the induced block $(e_P)^H$. Then $b_H$ covers $b_X$, $P$ is a defect group of $b_H$, and so $(P,e_P)$ is also a maximal $b_H$-Brauer pair. Since $\mathbb{E}(b)$ has prime order, the intersection $\mathrm{N}_G(P,e_P) \cap H$ is either $\mathrm{C}_G(P)$ or $\mathrm{N}_G(P,e_P)$.

If $\mathrm{N}_G(P,e_P) \cap H = \mathrm{C}_G(P)$, then the inertial quotient $\mathbb{E}(b_H)$ is trivial, so $b_H$ is nilpotent by \cite[Proposition 8.11.3]{L}, contradicting the assumption that $b_X$ is non-nilpotent covered. Hence, $\mathrm{N}_G(P,e_P) \cap H = \mathrm{N}_G(P,e_P)$, and the conclusion follows.
\end{proof}

\begin{lem}\label{Iso}
Keep the notation and assumptions above. Let $X$ be a component of $G$ and $b_X$ the block of $X$ covered by $b$. Assume that the inertial quotients $\mathbb{E}(b)$ and $\mathbb{E}(b_X)$ have prime order, that  the outer automorphism group $\mathrm{Out}(X)$ has an abelian Hall $p'$-subgroup, and that $b_X$ is non-nilpotent covered. Then $\mathbb{E}(b) \cong \mathbb{E}(b_X)$, and the hyperfocal subgroups of $b$ and $b_X$ are isomorphic.
\end{lem}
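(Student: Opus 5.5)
The plan is to compare the two inertial quotients through the conjugation action on $P$. Since $P$ is abelian, $\mathbb{E}(b)$ embeds in $\mathrm{Aut}(P)$. By Lemma \ref{NGPeP} we have $\mathrm{N}_G(P,e_P)=\mathrm{N}_X(P,e_P)\mathrm{C}_G(P)$, so
\[
\mathbb{E}(b)\cong \mathrm{N}_X(P,e_P)/\mathrm{C}_X(P),
\]
and every element of $\mathbb{E}(b)$ is realised by an element of $X$. Set $P_X=P\cap X$, a defect group of $b_X$ by \cite[Theorem 9.26]{N1}. Choose a maximal $b_X$-Brauer pair $(P_X,f)$ compatible with $(P,e_P)$, meaning that $(P_X,f)\le (P,e_P)$ holds as $b$-Brauer pairs restricted suitably. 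We then get a restriction homomorphism
\[
\rho:\mathrm{N}_X(P,e_P)\to \mathrm{N}_X(P_X,f)/\mathrm{C}_X(P_X)=\mathbb{E}(b_X).
\]
Since $X\unlhd G$ by Corollary \ref{X_i is normal}, the group $P_X$ is normalised by $\mathrm{N}_X(P,e_P)$, and compatibility of the Brauer pairs (uniqueness of inclusion of pairs) shows that $f$ is stabilised as well.

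The next step is to show that $\rho$ induces an isomorphism $\mathbb{E}(b)\to\mathbb{E}(b_X)$. For injectivity, an element $x\in X$ normalising $(P,e_P)$ and centralising $P_X$ acts on $P$ trivially modulo $P_X$ on the quotient side: indeed $[P,x]\le P\cap X=P_X$. An element of odd prime order centralising both $P_X$ and $P/P_X$ acts trivially on the $2$-group $P$ by coprime action. Since $|\mathbb{E}(b)|$ is an odd prime (it is a $p'$-group with $p=2$), $\rho$ is either injective or trivial. If $\rho$ were trivial, then $\mathbb{E}(b)$ would act trivially on $P_X$, so the $b_X$-part would be controlled by $\mathrm{C}$. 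In that case one shows $b_H$ (with $H=X\mathrm{C}_G(P)$ as in Lemma \ref{NGPeP}) and hence $b_X$ is nilpotent covered, a contradiction.

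For surjectivity, both groups have prime order, so it suffices to have nontriviality, which injectivity gives. This is where the hypothesis on $\mathrm{Out}(X)$ enters. We must ensure that the generator of $\mathbb{E}(b_X)$ lifts to an element of $\mathrm{N}_X(P,e_P)$ rather than only to $\mathrm{N}_X(P_X,f)$. I would argue with a Frattini-type argument inside $PX$, or in $\mathrm{Aut}(X)$: $P$ acts on $X$ through $\mathrm{Inn}(X)$ times a $2$-part of $\mathrm{Out}(X)$. The abelian Hall $p'$-subgroup of $\mathrm{Out}(X)$ guarantees that the $p'$-element generating $\mathbb{E}(b_X)$ commutes with the image of $P$ modulo inner automorphisms. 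This lets us adjust the element to normalise $P$ and $e_P$. I expect this lifting and compatibility step to be the main obstacle; the equal-prime-order conclusion would then follow just from injectivity plus $|\mathbb{E}(b_X)|$ prime.

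Finally, for the hyperfocal subgroups, write $E$ for a common generator. By coprime action on the abelian $2$-group $P$ we have $P=[P,E]\times \mathrm{C}_P(E)$. The previous step shows $E$ acts trivially on $P/P_X$, so $[P,E]\le P_X$. Hence $[P,E]=[[P,E],E]\le [P_X,E]\le [P,E]$, which gives $[P,\mathbb{E}(b)]=[P_X,\mathbb{E}(b_X)]$ under the identification $\rho$. These are the hyperfocal subgroups of $b$ and $b_X$, so they are isomorphic.
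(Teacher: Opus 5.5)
\textbf{There is a genuine gap: the map $\rho$ is not well defined by the argument you give.}

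Several parts of your proposal are fine:
\begin{itemize}
\item the reduction $\mathbb{E}(b)\cong \mathrm{N}_X(P,e_P)/\mathrm{C}_X(P)$ via Lemma~\ref{NGPeP};
\item the coprime-action argument showing that this group acts faithfully on $P_X=P\cap X$;
\item the final computation $[P,E]=[P_X,E]$.
\end{itemize}

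The gap is the step you treat as formal: that $\mathrm{N}_X(P,e_P)$ stabilises a block $f$ of $\mathrm{C}_X(P_X)$ with $(P_X,f)$ a maximal $b_X$-Brauer pair, so that $\rho$ lands in $\mathbb{E}(b_X)$. Uniqueness of inclusion of Brauer pairs only tells you that $\mathrm{N}_X(P,e_P)$ fixes the block $e_{P_X}$ of $\mathrm{C}_G(P_X)$ with $(P_X,e_{P_X})\le(P,e_P)$. Since $\mathrm{C}_X(P_X)\unlhd\mathrm{C}_G(P_X)$, this block covers a whole $\mathrm{C}_G(P_X)$-orbit of blocks of $\mathrm{C}_X(P_X)$, and elements of $\mathrm{N}_X(P,e_P)$ can permute that orbit. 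Choosing $f$ differently does not help. Because $[x,\mathrm{C}_G(P_X)]\le\mathrm{C}_X(P_X)$ for $x\in\mathrm{N}_X(P_X)$, such an $x$ fixes one member of the orbit if and only if it fixes all of them.

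This really happens. Take $p$ odd and $r\neq p$ an odd prime. Let commuting involutions $x,c$ act on $P\times Z=C_p\times C_r$, with $x$ inverting both factors and $c$ inverting $Z$ only. Put $G=(P\times Z)\rtimes\langle x,c\rangle$ and $X=(P\times Z)\rtimes\langle x\rangle\unlhd G$. Let $f$ be the block of $\mathrm{C}_X(P)=P\times Z$ over a nontrivial $\lambda\in\mathrm{Irr}(Z)$. Let $e_P$ be the block of $\mathrm{C}_G(P)=P\times(Z\rtimes\langle c\rangle)$ over $\mathrm{Ind}_Z^{Z\rtimes\langle c\rangle}\lambda$. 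Then:
\begin{itemize}
\item $\mathrm{N}_G(P,e_P)=G=\mathrm{N}_X(P,e_P)\mathrm{C}_G(P)$ and $|\mathbb{E}(b)|=2$;
\item but $x$ swaps $f$ and $f^c$, so $\mathbb{E}(b_X)=1$.
\end{itemize}
Every feature your argument actually uses is present here: $X$ normal, $\mathbb{E}(b)$ of prime order, and the factorisation from Lemma~\ref{NGPeP}. So the embedding $\mathbb{E}(b)\hookrightarrow\mathbb{E}(b_X)$ cannot follow from the Brauer-pair formalities you cite.

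\textbf{You have also misplaced the role of the hypothesis on $\mathrm{Out}(X)$.} Once $\rho$ is a well-defined injective homomorphism between two groups of prime order, it is automatically an isomorphism. So there is no surjectivity or lifting issue. Your lifting paragraph addresses a non-problem and is not itself an argument: a generator of $\mathbb{E}(b_X)$ is already induced by an element of $X$. The hypothesis is needed precisely to control the permutation of Brauer pairs described above.

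The paper does this as follows.
\begin{enumerate}
\item Reduce to $G=X\mathrm{C}_G(P)$.
\item Show $\mathrm{F}^*(G)=X\mathrm{Z}(G)$, so that $G/\mathrm{F}^*(G)$ embeds in $\mathrm{Out}(X)$.
\item Pass to the $p$-extension $N=P\mathrm{F}^*(G)$ using \cite[Proposition 2.5]{EL2}.
\item Climb the abelian $p'$-quotient $G/N$ through normal subgroups $K_{i+1}=K_i\mathrm{C}_G(P,e_{i,P})$. These are chosen so that $e_{i,P}$ is the unique block of $\mathrm{C}_{K_i}(P)$ covered by $e_{i+1,P}$. Only then does one get monomorphisms $\mathbb{E}(b_{i+1})\hookrightarrow\mathbb{E}(b_i)$.
\end{enumerate}
The abelianness of $G/N$, which is where the abelian Hall $p'$-subgroup of $\mathrm{Out}(X)$ enters, is what makes the $K_i$ normal.
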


\begin{proof}
Set $H = X \mathrm{C}_G(P)$ and let $(P, e_P)$ be a maximal $b$-Brauer pair. Let $b_H = (e_P)^H$ be the induced block. Then $b_H$ covers $b_X$, and $(P, e_P)$ is a maximal $b_H$-Brauer pair. Since $\mathrm{N}_X(P, e_P) \mathrm{C}_G(P) \leq H$, Lemma~\ref{NGPeP} yields $\mathrm{N}_H(P, e_P) = \mathrm{N}_G(P, e_P)$. By \cite[Proposition 6.1]{A}, we may therefore assume that $G = H$ and $b = b_H$.

Let $L = \mathrm{C}_G(X)$. Since $X$ is normal in $G$ by Corollary \ref{X_i is normal}, $L$ is also normal in $G$. Let $b_L$ be the block of $L$ covered by $b$. Since $b$ is reduced, $b_L$ is $G$-invariant. Set $P_L = P \cap L$. By \cite[Theorem 9.26]{N1}, $P_L$ is a defect group of $b_L$. Note that $X \leq \mathrm{C}_G(P_L)$, so $P_L$ is central in $G$.
Hence \( b_L \) is nilpotent, which forces \( L \leq \mathrm{O}_{p}(G)\mathrm{Z}(G). \)
By Corollary \ref{ZG}, we then obtain \( L \leq \mathrm{Z}(G) \).
Consequently, \( \mathrm{F}^{*}(G) = X \mathrm{Z}(G) \).
Therefore, the quotient group \( G / \mathrm{F}^{*}(G) \) is isomorphic to a subgroup of \( \mathrm{Out}(X) \), and hence is solvable.

Let $N=P \mathrm{F}^*(G)$. By \cite[Lemma 2.4]{CGA}, $N/ \mathrm{F}^*(G)$ is a Sylow $p$-subgroup of $G / \mathrm{F}^*(G)$.
Note that $G = X {\rm C}_G(P) = \mathrm{F}^*(G) {\rm C}_G(P)$. So  $N$ is normal in $G$. Since $\mathrm{Out}(X)$ has an abelian Hall $p'$-subgroup, so does $G / \mathrm{F}^*(G)$. Therefore, the quotient group $G/N$ is an abelian $p'$-group.

Let $b_N$ and $b_{\mathrm{F}^*(G)}$ be the blocks of $N$ and $\mathrm{F}^*(G)$ covered by $b$, respectively. By \cite[Proposition 2.5]{EL2}, $b_N$ and $b_{\mathrm{F}^*(G)}$ have isomorphic inertial quotients and the same hyperfocal subgroup. As $\mathrm{F}^*(G) = X \mathrm{Z}(G)$, the same is true for $b_{\mathrm{F}^*(G)}$ and $b_X$. Therefore, $b_N$ and $b_X$ have isomorphic inertial quotients and the same hyperfocal subgroup.

If $\mathrm{C}_G(P) \leq N$, then $G = N$ and the lemma follows trivially. Thus we may assume  $\mathrm{C}_N(P) \neq \mathrm{C}_G(P)$. We now argue the same as the proof of \cite[Theorem 4.12]{EM}.

Since $G/N$ is an abelian $p'$-group, we construct inductively a chain of normal subgroups
\[
N = K_0 \lneqq K_1 \lneqq K_2 \lneqq \dots \lneqq K_n
\]
as follows. Set $K_0 = N$ and let $b_0 = b_N$. Suppose $K_i$ has been defined. Let $b_i$ be the unique block of $K_i$ covered by $b$, and let $(P, e_{i,P})$ be the maximal $b_i$-Brauer pair such that $e_{i,P}$ covers $e_{i-1,P}$ (for $i \geq 1$). Denote by $\mathrm{C}_G(P, e_{i,P})$ the stabilizer of $e_{i,P}$ in $\mathrm{C}_G(P)$. If $\mathrm{C}_G(P, e_{i,P}) = \mathrm{C}_{K_i}(P)$, the construction stops. Otherwise, set
\[
K_{i+1} = K_i \mathrm{C}_G(P, e_{i,P}).
\]
Then $K_{i+1}$ is a normal subgroup of $G$ strictly containing $K_i$, and the process terminates after finitely many steps.

From the construction we have:
\[
\mathrm{C}_{K_{i+1}}(P, e_{i,P}) = \mathrm{C}_{K_{i+1}}(P), \quad
\mathrm{N}_{K_{i+1}}(P, e_{i,P}) = \mathrm{N}_{K_i}(P, e_{i,P}) \mathrm{C}_{K_{i+1}}(P),
\]
and $e_{i,P}$ is the unique block of $\mathrm{C}_{K_i}(P)$ covered by $e_{i+1,P}$. Consequently,
\[
\mathrm{N}_{K_{i+1}}(P, e_{i+1,P}) \subseteq \mathrm{N}_{K_{i+1}}(P, e_{i,P}).
\]

Let $\mathbb{E}(b_i) = \mathrm{N}_{K_i}(P, e_{i,P}) / \mathrm{C}_{K_i}(P)$ be the inertial quotient of $b_i$. The inclusion above induces a group monomorphism $\iota_i: \mathbb{E}(b_{i+1}) \hookrightarrow \mathbb{E}(b_i)$. The composition $\iota = \iota_1 \circ \iota_2 \circ \cdots \circ \iota_{n-1}$ yields a group monomorphism from $\mathbb{E}(b_n)$ to $\mathbb{E}(b_N)$. Since $b_X$ (and hence $b_N$) is non-nilpotent covered, we have that the order $|\mathbb{E}(b_n)| > 1$. However, $\mathbb{E}(b_N)$ has prime order, which implies that $\mathbb{E}(b_n) \cong \mathbb{E}(b_N)$ and that all $\iota_i$ are group isomorphisms. Therefore, the actions of $\mathbb{E}(b_i)$ on $P$ are identical, and the hyperfocal subgroup of $b_n$ coincides with that of $b_N$.

Recall that $(P, e_P)$ is a maximal $b$-Brauer pair. Since $b$ covers $b_n$, we may assume that $e_P$ covers $e_{n,P}$. If $G = K_n$, the lemma follows. Assume now $K_n\lneqq G$. By the proof of \cite[Lemma 4.11(2)]{EM}, the inclusion $\mathrm{N}_{K_n}(P, e_{n,P}) \subseteq \mathrm{N}_G(P, e_P)$ induces a group monomorphism $\mathbb{E}(b_n) \hookrightarrow \mathbb{E}(b)$. Since both $\mathbb{E}(b_n)$ and $\mathbb{E}(b)$ have prime order, this map is an isomorphism. Hence the actions of $\mathbb{E}(b_n)$ and $\mathbb{E}(b)$ on $P$ are the same.

Summarizing the above, we have $\mathbb{E}(b) \cong \mathbb{E}(b_X)$, and the hyperfocal subgroups of $b$ and $b_X$ are isomorphic.
\end{proof}

\section{Proof of the Main Results}

This section is devoted to the proofs of Theorem~\ref{Main1} and Corollary~\ref{Main2}. Throughout this section, we set $p=2$. We begin by recalling the classification of blocks of quasi-simple groups with abelian defect groups from \cite{EKK}.

\begin{thm}[{\cite[Theorem 6.1]{EKK}}]\label{2-block-abel}
Let $X$ be a quasi-simple group. If $c$ is a block of $X$ with abelian defect group $D$,
then one (or more) of the following holds:
\begin{enumerate}[itemsep=-3pt,topsep=7pt,label=\emph{(\roman*)}]
\item $X/{\rm Z}(X)$ is one of $A_1(2^a)$, ${}^{2}G_2(q)$ (where $q \geq 27$ is a power of $3$ with odd
exponent), or $J_1$, $c$ is the principal block and $D$ is elementary abelian;
\item $X$ is  $Co_3$, $c$ is a non-principal block and $D$ is an elementary abelian group
of rank $3$ (there is one such block);
\item $c$ is Morita equivalent to a block $d$ of a subgroup $L=L_0\times L_1$ of $X$ as following:
The defect group of $d$ is isomorphic to $D$, $L_0$ is abelian and the block of $L_1$ covered by $d$ has Klein four defect groups.
\item $c$ is nilpotent covered.
\end{enumerate}
\end{thm}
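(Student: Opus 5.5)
The plan is a case-by-case analysis using the classification of finite simple groups, since this statement is \cite[Theorem 6.1]{EKK}. Let $c$ be a $2$-block of the quasi-simple group $X$ with abelian defect group $D$, and put $S=X/{\rm Z}(X)$. First I would reduce to the case where ${\rm Z}(X)$ has odd order. A central $2$-subgroup lies in every defect group, and a block of $X$ dominates a block of $X/{\rm O}_2({\rm Z}(X))$ with defect group $D/{\rm O}_2({\rm Z}(X))$. One checks that each of the four outcomes lifts through such a central extension; for (iv) this uses that nilpotent coverage passes to central extensions by $2$-groups. I would then run through the families for $S$: alternating groups, sporadic groups, groups of Lie type in characteristic $2$, and groups of Lie type in odd characteristic.

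\emph{Alternating groups and their covers.} By the Brou\'e--Puig / Scopes description, $2$-blocks of $S_n$ and $A_n$ are parametrized by $2$-cores and weights, and defect groups are Sylow $2$-subgroups of $S_{2w}$ (or their intersections with $A_n$). Such a group is abelian only for $w\le 1$ in $S_n$, and for $w\le 2$ in $A_n$, where one gets a Klein four group. Hence these blocks have cyclic or Klein four defect groups, which falls under (iv) or (iii) with $L_0=1$. The double covers $2.A_n$ have spin blocks whose defect groups are nonabelian unless they are tiny, and the few small cases can be checked directly. For the sporadic groups, I would read the defect groups off the known block distributions via the Modular Atlas and GAP. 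Almost every abelian case has cyclic, Klein four, or central defect and lands in (iii)/(iv). The exceptions are $J_1$ (Sylow $C_2^3$, principal block) and the unique non-principal block of $Co_3$ with $D\cong C_2^3$, giving (i) and (ii).

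\emph{Lie type in characteristic $2$.} Here non-principal blocks have defect zero, apart from those dominated through the exceptional covers. The principal block has a Sylow $2$-subgroup as defect group, and this is abelian only for $A_1(2^a)$, which gives (i). The exceptional Schur multipliers would be checked individually.

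\emph{Lie type in odd characteristic $r$.} This is the main body of the proof. Use Brou\'e--Michel and Bonnaf\'e--Rouquier: every $2$-block lies in $\mathcal{E}_2(G^F,s)$ for a semisimple $2'$-element $s$. When ${\rm C}_{G^*}(s)$ lies in a proper $F$-stable Levi subgroup, Bonnaf\'e--Dat--Rouquier gives a Morita equivalence with a block of a smaller group $L^F$ having the same defect group. If I can arrange $L^F=L_0\times L_1$ with $L_0$ abelian (a torus part) and the relevant block of $L_1$ of Klein four defect, then (iii) holds. Otherwise the equivalent block is nilpotent or covered by a nilpotent block, which is (iv). This induction is the step I expect to be the \textbf{main obstacle}. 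Two things need care: controlling the Levi when ${\rm C}_{G^*}(s)$ is disconnected, and verifying that abelian $D$ forces the quasi-isolated part to be of type $A_1$ (so that the defect is Klein four or cyclic) or a torus. For this I would use the description of abelian Sylow $2$-subgroups of centralizers: only $A_1(q)$-type factors and tori contribute. In the quasi-isolated case, I would use Enguehard's description of unipotent $2$-blocks, in which defect groups are Sylow $2$-subgroups of ${\rm C}_G(Z)$ for a $2$-split Levi. Abelianness then forces factors $\mathrm{SL}_2(q)$ with $q\equiv\pm3\pmod 8$, whose principal blocks have Klein four defect, together with tori. Finally ${}^2G_2(q)$ is handled directly: its Sylow $2$-subgroup is $C_2^3$, and the principal block gives (i). Non-principal blocks of ${}^2G_2(q)$ have defect at most $C_2\times C_2$ via centralizers $\langle t\rangle\times \mathrm{PSL}_2(q)$ and hence fall under (iii) or (iv).
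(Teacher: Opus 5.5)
\paragraph{Comparison with the paper, and the gaps.} The paper gives no argument for this statement. It quotes \cite[Theorem 6.1]{EKK} verbatim and uses it as a black box, so the only meaningful comparison is with the classification-based proof in \cite{EKK}. Your outline has roughly that overall shape: abelian Sylow $2$-subgroups for principal blocks, defining characteristic, and Bonnaf\'e--Rouquier reduction in odd characteristic. But it fails exactly at the step that produces cases (iii) and (iv).

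First, your claim that abelianness ``forces factors $\mathrm{SL}_2(q)$ with $q\equiv\pm 3\pmod 8$, whose principal blocks have Klein four defect'' is false. For $q$ odd, $\mathrm{SL}_2(q)$ has a unique involution, so its Sylow $2$-subgroups are generalized quaternion ($Q_8$ when $q\equiv\pm 3\pmod 8$). It is $\mathrm{PSL}_2(q)$ whose Sylow $2$-subgroups are Klein four groups. So $A_1$-factors of Levi subgroups of the simply connected group $G^F$ contribute quaternion, not Klein four, subgroups, and your mechanism for case (iii) does not exist as described. Locating the genuine Klein four configurations is the content of the analysis behind \cite[Proposition 5.3]{EKK}; as used in Lemma~\ref{G is Klein}, this places the cases with $|D|>4$ in types $D_n(q)$ and $E_7(q)$.

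Your opening reduction is also unjustified. Passing to $\bar X=X/\mathrm{O}_2(\mathrm{Z}(X))$ preserves the hypothesis, but you give no argument that (iii) or (iv) lifts back, and there is no formal reason it should. A Morita equivalence between the dominated block $\bar c$ and a block of $\bar L_0\times\bar L_1\le \bar X$ gives nothing for $c$: the preimage of $\bar L_0\times\bar L_1$ is only a central extension, it need not split, and Morita equivalences do not in general lift along central extensions. Likewise, an overgroup of $\bar X$ witnessing nilpotent coverage need not act on $X$, because automorphisms of $\bar X$ need not lift to $X$. One has to construct the equivalences for $G^F$, using bimodules on which the centre acts compatibly on both sides, and then descend to central quotients, not the other way round.

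Second, the inference ``otherwise the equivalent block is nilpotent or covered by a nilpotent block, which is (iv)'' does not go through. Outcome (iv) says that $c$ itself is covered by a nilpotent block of some group containing $X$ as a normal subgroup. A Morita equivalence between $c$ and a block of $L^F$, together with information about overgroups of $L^F$, does not produce such a group. This matters in practice. Take $X=\mathrm{SL}_3(q)$ with $3\mid q-1$, and let $s$ be the image of $\mathrm{diag}(1,\omega,\omega^2)$ in $\mathrm{PGL}_3(q)$, where $\omega\in\mathbb{F}_q$ is a primitive cube root of unity. The block in $\mathcal{E}_2(X,s)$ has abelian defect group $C_{(q-1)_2}^2$ and inertial quotient $C_3$, so it is not nilpotent. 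It satisfies (iv) only because the covering block of $\mathrm{GL}_3(q)$, where the centraliser of $s$ becomes a torus, is nilpotent. So (iv) must be proved directly in an overgroup of $X$ with connected centre, via a regular embedding $G^F\le \tilde G^F$, and your outline never does this.

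With these points missing, the step you flag as the ``main obstacle'' is not merely delicate but unresolved. The proposal is a roadmap of \cite{EKK} rather than a proof. For this paper, which only applies the theorem, citing \cite[Theorem 6.1]{EKK} is the appropriate justification.
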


Let $b$ be a block of a finite group $G$ with defect group $P$. Assume that $b$ is reduced, that $P$ is abelian, that $G$ has a component $X$, and that the inertial quotient $\mathbb{E}(b)$ has prime order. By Corollary~\ref{X_i is normal}, $X$ is normal in $G$. Denote by $b_X$ the unique block of $X$ covered by $b$, and assume further that $b_X$ is non-nilpotent covered.

\begin{lem}\label{G is Klein}
Keep the notation and assumptions above. If the block $b_X$ has a Klein four hyperfocal subgroup,
then the hyperfocal subgroup of $b$ is also a Klein four group.
\end{lem}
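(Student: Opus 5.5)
The natural route is to reduce to Lemma~\ref{Iso}. That lemma gives an isomorphism of hyperfocal subgroups between $b$ and $b_X$, but it requires three hypotheses: $\mathbb{E}(b)$ and $\mathbb{E}(b_X)$ have prime order, $\mathrm{Out}(X)$ has an abelian Hall $2'$-subgroup, and $b_X$ is non-nilpotent covered. The last holds by our standing assumption and $\mathbb{E}(b)$ has prime order by hypothesis, so two things remain: that $\mathbb{E}(b_X)$ has prime order, and the condition on $\mathrm{Out}(X)$.

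For $\mathbb{E}(b_X)$, let $Q$ be a defect group of $b_X$. Since $X\unlhd G$ and $b$ is reduced, \cite[Theorem 9.26]{N1} lets us take $Q=P\cap X$, which is abelian. The hyperfocal subgroup is $[Q,\mathbb{E}(b_X)]\cong C_2\times C_2$. Because $\mathbb{E}(b_X)$ is a $2'$-group acting faithfully on the abelian $2$-group $Q$, coprime action gives $Q=C_Q(\mathbb{E}(b_X))\times[Q,\mathbb{E}(b_X)]$. Faithfulness then forces $\mathbb{E}(b_X)$ to embed in $\mathrm{Aut}(C_2\times C_2)\cong S_3$ as a $2'$-subgroup. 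Non-triviality of the hyperfocal subgroup excludes the trivial case, so $\mathbb{E}(b_X)\cong C_3$, which has prime order.

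The condition on $\mathrm{Out}(X)$ is the step I expect to be the main obstacle, since it is not automatic for an arbitrary quasi-simple group. I would first try to avoid it. The proof of Lemma~\ref{Iso} uses that condition only to make $G/N$ an abelian $2'$-group, so that the chain $K_0\lneqq\dots\lneqq K_n$ can be built. Here I would look for a direct route. As in Lemma~\ref{Iso}, first reduce to $G=X\mathrm{C}_G(P)$ via Lemma~\ref{NGPeP}, so that $\mathrm{N}_G(P,e_P)=\mathrm{N}_X(P,e_P)\mathrm{C}_G(P)$. Then any element of $\mathbb{E}(b)$ is induced by an element $x\in \mathrm{N}_X(P,e_P)$. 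Such an $x$ normalizes $Q=P\cap X$, and since $P$ is abelian, $x$ acts on $P$ with $[P,x]\le P\cap X=Q$.

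To compare the two actions, I would show that the $e_P$ and $e_Q$ Brauer pairs are compatible, i.e.\ that $x$ stabilizes a maximal $b_X$-Brauer pair $(Q,f_Q)$. Then $x$ acts on $Q$ through $\mathbb{E}(b_X)\cong C_3$. The generator of $\mathbb{E}(b)$ has odd prime order and is nontrivial on $P$. By coprime action it is therefore nontrivial on $[P,x]\le Q$, so it maps to a nontrivial element of $\mathbb{E}(b_X)$. This gives $\mathbb{E}(b)\cong C_3$ acting on $Q$ exactly as $\mathbb{E}(b_X)$ does, and hence $[P,\mathbb{E}(b)]=[Q,\mathbb{E}(b)]=[Q,\mathbb{E}(b_X)]$, a Klein four group.

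If compatibility of the Brauer pairs proves awkward, the fallback is case by case. Apply Theorem~\ref{2-block-abel} to $b_X$: Klein four hyperfocal subgroup together with non-nilpotent covered puts us in case (i) or (iii). Then verify that the relevant $\mathrm{Out}(X)$ (of $A_1(2^a)$, ${}^2G_2(q)$, $J_1$, or of the groups of Lie type arising in (iii)) has an abelian Hall $2'$-subgroup, and invoke Lemma~\ref{Iso} directly.
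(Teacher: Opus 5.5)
Your proof that $\mathbb{E}(b_X)\cong C_3$ by coprime action is correct; the paper gets this from \cite[Corollary 4.8]{EM}. Your direct route is also correct once its key step is granted. The gap is that key step, which you defer: that an element $x\in\mathrm{N}_X(P,e_P)$ inducing a generator of $\mathbb{E}(b)$ stabilizes some maximal $b_X$-Brauer pair $(Q,f_Q)$. You give no argument for it, and it is exactly the difficulty that the $\mathrm{Out}(X)$ hypothesis and the chain $K_0\lneqq\cdots\lneqq K_n$ in Lemma~\ref{Iso} are there to handle.

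What you can prove is that $x$ fixes the $b$-Brauer pair $(Q,e_Q)\le(P,e_P)$. However, $e_Q$ covers a whole orbit of blocks of $\mathrm{C}_X(Q)$ under $\mathrm{C}_G(Q)=\mathrm{C}_X(Q)\mathrm{C}_G(P)$. Elements of $\mathrm{C}_G(P)$ may act on $X$ by outer automorphisms and move these blocks, and nothing forces $x$ to fix one of them. If you correct $x$ by some $c\in\mathrm{C}_G(P)$ so that $xc$ fixes $f_Q$, you leave $X$. You then only learn that the automorphism of $Q$ induced by $x$ lies in the image of $\mathrm{N}_G(Q,f_Q)$. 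That image is an extension of $\mathbb{E}(b_X)$ by a section of $G/X$, not $\mathbb{E}(b_X)$ itself. So assuming compatibility assumes the hard part.

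Your fallback is close to the paper's proof, but as stated it cannot be carried out. Case (iii) of Theorem~\ref{2-block-abel} does not name any groups. When $D=P\cap X$ is itself a Klein four group, every quasi-simple group with a block of Klein four defect group satisfies (iii) trivially (take $L_0=1$, $L_1=X$), so there is no list on which to check $\mathrm{Out}(X)$.

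The missing idea is to split according to $|D|$.
\begin{itemize}
\item If $|D|=4$, your own remark finishes the proof with no Brauer pair comparison. Here $[P,x]\le D$ is nontrivial and $x$ acts on it nontrivially with odd order, so $[P,x]=D$. This is the paper's use of \cite[Lemma 3.7]{Ta}.
\item If $|D|>4$, case (i) is excluded, since a Klein four hyperfocal subgroup there forces $X\cong A_1(4)$ and $|D|=4$. The groups ${}^2G_2(q)$, $J_1$ and $Co_3$ are excluded because their inertial quotient is $C_7\rtimes C_3$. The last paragraph of the proof of \cite[Theorem 6.1]{EKK}, together with \cite[Proposition 5.3]{EKK}, then shows that $X/\mathrm{Z}(X)$ is of type $E_7(q)$ or $D_n(q)$ with $q$ odd and $n/2$ odd. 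For these, $\mathrm{Out}(X)$ has an abelian Hall $2'$-subgroup by \cite[Theorem 2.5.12]{GLS}, and only then does Lemma~\ref{Iso} apply.
\end{itemize}
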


\begin{proof}
Let $D = P \cap X$. Then $D$ is a defect group of the block $b_X$. By Lemma \ref{NGPeP}, the hyperfocal subgroup $Q$ of $b$ with respect to $(P,e_P)$ is contained in $D$. Suppose that $D$ is a Klein four group. Note that $b$ is not nilpotent. So it follows from \cite[Lemma 3.7]{Ta} that the hyperfocal subgroup of $b$ is equal to $D$.

Thus we may assume that $|D| > 4$. Since $b_X$ is non-nilpotent covered and has a Klein four hyperfocal subgroup,
it follows from \cite[Corollary 4.8]{EM} that the inertial quotient $\mathbb{E}(b_X)$ has order $3$, and that
the block $b_X$ occurs only in case (iii) of Theorem~\ref{2-block-abel}.

By the last paragraph of the proof of \cite[Theorem 6.1]{EKK} and \cite[Proposition 5.3]{EKK}, it follows that $X/{\rm Z}(X)$ is a simple group of type $E_7(q)$ or $D_n(q)$ for $q$ an odd prime power and $n/2$ odd. Furthermore, by \cite[Theorem 2.5.12]{GLS}, $\mathrm{Out}(X)$ has an abelian Hall $2'$-subgroup. Therefore, the lemma now follows by applying Lemma~\ref{Iso}.
\end{proof}

\begin{lem}\label{G=XR}
Keep the notation and assumptions above. If $X/{\rm Z}(X)$ is isomorphic to $A_1(2^a)$ and $b_X$ is the principal block,
then $G$ is isomorphic to the direct product of $A_1(2^a)$ and an abelian $2$-group.
Furthermore, $2^a-1$ is  prime.
\end{lem}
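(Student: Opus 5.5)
We will determine $G$ by pinning down the structure of $X$, then of $G/\mathrm{F}^*(G)$, and finally read off the primality of $2^a-1$ from $\mathbb{E}(b)$.

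First, $X$ is a quasi-simple group with $X/\mathrm{Z}(X)\cong A_1(2^a)=SL_2(2^a)$. For $a\ge 2$ and $2^a\neq 4$ the Schur multiplier of $SL_2(2^a)$ is trivial. The exceptional case $A_1(4)\cong A_5$ has multiplier of order $2$, but its cover $SL_2(5)$ has quaternion Sylow $2$-subgroups. Since $D=P\cap X$ is abelian, this cover is excluded, so $X\cong A_1(2^a)$ with $\mathrm{Z}(X)=1$. (The case $a=1$ is not quasi-simple.)

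Next, arguing as in the proof of Lemma~\ref{Iso}, we reduce to $G=X\mathrm{C}_G(P)$. Lemma~\ref{NGPeP} gives $\mathrm{N}_G(P,e_P)=\mathrm{N}_X(P,e_P)\mathrm{C}_G(P)$, and \cite[Proposition 6.1]{A} allows the passage to $X\mathrm{C}_G(P)$. In fact Lemma~\ref{G=N}, applied to $N=X\mathrm{C}_G(P)$, gives $G=X\mathrm{C}_G(P)$ directly. As in Lemma~\ref{Iso}, $L=\mathrm{C}_G(X)$ covers a block with central defect group $P\cap L$, so by reducedness and Corollary~\ref{ZG} we get $L\le \mathrm{Z}(G)=\mathrm{O}_2(G)\mathrm{O}_{2'}(G)$. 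Hence $\mathrm{F}^*(G)=X\times \mathrm{Z}(G)$, and $G/\mathrm{F}^*(G)$ embeds in $\mathrm{Out}(X)$, which is cyclic of order $a$ (field automorphisms).

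The crucial step is to show $G=\mathrm{F}^*(G)$. Take $g\in\mathrm{C}_G(P)$; it centralizes the Sylow $2$-subgroup $D=P\cap X$ of $X$. A field automorphism of $SL_2(2^a)$ centralizing the full unipotent radical $D$ must be inner-diagonal modulo $X$: a field automorphism $\sigma\neq 1$ does not fix all of $\mathbb{F}_{2^a}$, and the only elements of $X\rtimes\langle\sigma\rangle$ centralizing $D$ lie in $D\times\mathrm{C}_G(X)$. Hence $\mathrm{C}_G(P)\le D\,\mathrm{C}_G(X)\le X\mathrm{Z}(G)$, so $G=X\mathrm{C}_G(P)=X\times\mathrm{Z}(G)$. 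I expect this to be the main obstacle, and would verify it via $\mathrm{C}_{\mathrm{Aut}(X)}(D)=D$, checked on the Borel subgroup. Since $b$ is reduced, $\mathrm{O}_{2'}(G)\le\mathrm{Z}(G)\cap[G,G]=\mathrm{Z}(G)\cap X=1$. Therefore $\mathrm{Z}(G)=\mathrm{O}_2(G)=:R$ is an abelian $2$-group, and $G\cong A_1(2^a)\times R$.

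Finally, $\mathbb{E}(b)\cong\mathbb{E}(b_X)$ is the inertial quotient of the principal block of $SL_2(2^a)$. This is $\mathrm{N}_X(D)/D\mathrm{C}_X(D)$, the torus of order $2^a-1$, acting on $D$. (Alternatively this follows from Lemma~\ref{Iso}, as $\mathrm{Out}(X)$ is cyclic.) Since $|\mathbb{E}(b)|$ is prime, $2^a-1$ is prime.
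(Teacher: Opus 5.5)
\textbf{Gap: the step giving $G=X\mathrm{C}_G(P)$.}
You obtain $G=X\mathrm{C}_G(P)$ by applying Lemma~\ref{G=N} to $N=X\mathrm{C}_G(P)$. That lemma requires $N\unlhd G$, and you never show $X\mathrm{C}_G(P)$ is normal in $G$. The Frattini argument gives $G=X\mathrm{N}_G(D)$ for the Sylow $2$-subgroup $D=P\cap X$ of $X$. But $\mathrm{N}_G(D)$ normalizes $\mathrm{C}_G(D)$, not $\mathrm{C}_G(P)$. Lemma~\ref{NGPeP} only puts $\mathrm{N}_G(P,e_P)$ inside $X\mathrm{C}_G(P)$; it does not make that subgroup normal.

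Your fallback, ``reduce to $G=X\mathrm{C}_G(P)$ as in Lemma~\ref{Iso}'', is not available either. That reduction replaces $G$ by the subgroup $H=X\mathrm{C}_G(P)$. This is harmless in Lemma~\ref{Iso}, where only $\mathbb{E}(b)$ and the hyperfocal subgroup (both read off from $\mathrm{N}_G(P,e_P)$) are at stake. Here, however, the claim is about the isomorphism type of $G$ itself.

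The rest of your argument depends on $G=X\mathrm{C}_G(P)$:
\begin{itemize}
\item it is what makes $P\cap L$ central, and hence gives $b_L$ nilpotent and $L\le \mathrm{Z}(G)$;
\item it is the final equality $G=X\mathrm{C}_G(P)=X\times\mathrm{Z}(G)$.
\end{itemize}
Separately, your parenthetical alternative via Lemma~\ref{Iso} is circular, since that lemma assumes $|\mathbb{E}(b_X)|$ is prime.

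\textbf{How to repair it.} Work with $D$ instead of $P$, which is what the paper does.
\begin{enumerate}
\item $X\mathrm{C}_G(D)$ is normal, because $G=X\mathrm{N}_G(D)$, and it contains $\mathrm{C}_G(P)$. So Lemma~\ref{G=N} gives $G=X\mathrm{C}_G(D)$.
\item Your Borel computation $\mathrm{C}_{\mathrm{Aut}(X)}(D)=D$ is correct and applies verbatim to every $g\in\mathrm{C}_G(D)$. It yields $\mathrm{C}_G(D)=D\times\mathrm{C}_G(X)$, hence $G=X\times L$ with $L=\mathrm{C}_G(X)$. This plays the role of \cite[Lemma 4.10]{EM} in the paper.
\item You can no longer get $L\le\mathrm{Z}(G)$ from centrality of $P\cap L$. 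Instead write $b=b_X\otimes b_L$, so $\mathbb{E}(b)\cong\mathbb{E}(b_X)\times\mathbb{E}(b_L)$ with $|\mathbb{E}(b_X)|=2^a-1>1$. Since $|\mathbb{E}(b)|$ is prime, $2^a-1$ must be prime and $\mathbb{E}(b_L)$ trivial.
\item So $b_L$ is nilpotent. Reducedness gives $L\le\mathrm{Z}(G)\mathrm{O}_2(G)$, and Corollary~\ref{ZG} gives $\mathrm{O}_2(G)\le\mathrm{Z}(G)$, so $L\le \mathrm{Z}(G)$ is abelian. Then $\mathrm{O}_{2'}(L)\le \mathrm{Z}(G)\cap[G,G]=\mathrm{Z}(G)\cap X=1$, so $L$ is an abelian $2$-group.
\end{enumerate}

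Your Schur-multiplier argument for $X\cong A_1(2^a)$ (excluding $\mathrm{SL}_2(5)$ because its Sylow $2$-subgroup is quaternion) is a sound self-contained substitute for the paper's appeal to \cite[Corollary 4.8]{EM}.
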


\begin{proof}
By \cite[Corollary 4.8]{EM} and the assumption that $X/{\rm Z}(X) \cong A_1(2^a)$, it follows that $X$ is isomorphic to $A_1(2^a)$  and that the inertial quotient $\mathbb{E}(b_X)$ has order $2^a-1$.

Let $L = {\rm C}_G(X)$. Since $X$ is normal in $G$ by Corollary \ref{X_i is normal}, $L$ is also normal in $G$. Since $X$ is simple, the product $H = XL$ is a direct product. Obviously, $H$ is a normal subgroup of $G$, so the quotient group $G/H$ embeds into $\mathrm{Out}(X)$.

Let $D = P \cap X$. Then $D$ is a defect group of $b_X$. Since $b_X$ is the principal block, $D$ is a Sylow $2$-subgroup of $X$. By the Frattini argument, we have $G = X {\rm N}_G(D)$. Furthermore, the subgroup $X {\rm C}_G(D)$ is normal in $G$. Obviously, ${\rm C}_G(P) \leq {\rm C}_G(D)$. Applying Lemma \ref{G=N}, we conclude that $G = X {\rm C}_G(D)$.
Now set $\bar{G} = G/L$, and let $\bar{D}$ and $\bar{X}$ be the images of $D$ and $X$ in $\bar{G}$, respectively. Then $\bar{G} = \bar{X} {\rm C}_{\bar{G}}(\bar{D})$. We identify $\bar{G}$ with a subgroup of $\mathrm{Aut}(X)$ containing $X$ as a normal subgroup.
It follows form \cite[Lemma 4.10]{EM} that $\bar{G} = \bar{X}$. Hence, $G=X\times L$.

Let $b_L$ be the block of $L$ such that $b = b_X \otimes b_L$. Then the corresponding inertial quotient satisfies $\mathbb{E}(b) \cong \mathbb{E}(b_X) \times \mathbb{E}(b_L)$. We know that $|\mathbb{E}(b_X)| = 2^a - 1$ and, by assumption, $|\mathbb{E}(b)|$ is a prime number. Therefore, $2^a - 1$ must be prime and $\mathbb{E}(b_L)$ must be trivial. Consequently, by \cite[Proposition 8.11.3]{L}, the block $b_L$ is nilpotent.

Finally, since $b$ is reduced and $b_L$ is nilpotent, it follows that $L$ is an abelian $2$-group. This completes the proof of the lemma.
\end{proof}

\begin{lem}\label{cannot}
Keep the notation and assumptions above. Then $X/{\rm Z}(X)$ cannot be isomorphic to a simple group of type ${}^{2}G_2(q)$ (where $q \geq 27$ is a power of $3$ with an odd exponent), $J_1$, or $Co_3$.
\end{lem}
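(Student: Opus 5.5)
The idea is to show that in each of the three cases the inertial quotient of $b_X$ is too large to be compatible with $|\mathbb{E}(b)|$ prime. The tool is Lemma~\ref{Iso}, or a direct monomorphism argument as in its proof.

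First we pin down $b_X$ in each case, using Theorem~\ref{2-block-abel}. For ${}^2G_2(q)$ and $J_1$, the blocks with abelian non-nilpotent-covered defect groups are principal blocks with elementary abelian Sylow $2$-subgroup $D$ of order $8$. Here $\mathrm{N}_X(D)/\mathrm{C}_X(D)\cong C_7\rtimes C_3$, of order $21$. Since $b_X$ is principal, Brauer's third main theorem gives $\mathbb{E}(b_X)\cong \mathrm{N}_X(D)/\mathrm{C}_X(D)$. For $Co_3$, the relevant block is the non-principal block with defect group $C_2^3$, whose inertial quotient is known to be $C_7\rtimes C_3$ as well (\cite{EKK}). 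In each case $\mathrm{Z}(X)$ has odd order (the Schur multipliers are trivial, so $X$ is simple). Moreover, $\mathrm{Out}(X)$ is cyclic of odd order for ${}^2G_2(q)$, generated by field automorphisms, and trivial for $J_1$ and $Co_3$. So $\mathrm{Out}(X)$ certainly has an abelian Hall $2'$-subgroup.

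Next we compare $\mathbb{E}(b)$ with $\mathbb{E}(b_X)$. Lemma~\ref{Iso} requires $\mathbb{E}(b_X)$ to have prime order, which fails here, so we use its argument directly. By Lemma~\ref{NGPeP}, $\mathrm{N}_G(P,e_P)=\mathrm{N}_X(P,e_P)\mathrm{C}_G(P)$, so $\mathbb{E}(b)$ is a quotient of $\mathrm{N}_X(P,e_P)$ acting on $P$. We reduce to $G=X\mathrm{C}_G(P)$ as in that proof, so that $\mathrm{F}^*(G)=X\mathrm{Z}(G)$ and $G/\mathrm{F}^*(G)$ embeds in $\mathrm{Out}(X)$. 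Then we run the chain construction $N=K_0\le\dots\le K_n\le G$. The monomorphisms there go from $\mathbb{E}(b_{i+1})$ into $\mathbb{E}(b_i)$, and also from $\mathbb{E}(b_n)$ into $\mathbb{E}(b)$. For the contradiction, however, we need the reverse containment: we want $\mathbb{E}(b)$ to contain a copy of $\mathbb{E}(b_X)$.

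The cleaner route is to restrict to $D=P\cap X$. The elements of $\mathrm{N}_X(D)$ act on $D$ through $C_7\rtimes C_3$. Since $P=D\times(P\cap \mathrm{C}_G(X))$ up to central factors, and $\mathrm{C}_G(X)\le \mathrm{Z}(G)$ as shown in the proof of Lemma~\ref{Iso}, the whole group $\mathrm{N}_X(D,e_D)$ normalizes $P$ and stabilizes a suitable $e_P$. Hence $\mathrm{N}_X(D)/\mathrm{C}_X(D)\cong C_7\rtimes C_3$ embeds in $\mathbb{E}(b)$, as the action on $D\le P$ is faithful. This contradicts $|\mathbb{E}(b)|$ being prime, since $21$ is not prime and any group containing $C_7\rtimes C_3$ has non-prime order.

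The main obstacle is this last embedding. One must check that the Brauer pair for $b_X$ extends compatibly to a maximal $b$-Brauer pair $(P,e_P)$, so that $\mathrm{N}_X(D,f_D)$ really lies in $\mathrm{N}_G(P,e_P)$. For this I would use that $P=D\,\mathrm{O}_2(G)$ with $\mathrm{O}_2(G)\le \mathrm{Z}(G)$, by Corollary~\ref{ZG} and the structure $G=X\mathrm{C}_G(P)$. Then $\mathrm{C}_G(P)=\mathrm{C}_G(D)$, and $e_P$ may be taken to cover $f_D$ with stabilizer containing $\mathrm{N}_X(D,f_D)$, using uniqueness of blocks of $\mathrm{C}_G(D)$ covering $f_D$ via the reducedness of $b$.
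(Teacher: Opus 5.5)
Your route differs from the paper's. The paper handles $J_1$ and $Co_3$ via $\mathrm{Out}(X)=1$, so that $G=X\times \mathrm{C}_G(X)$ and $\mathbb{E}(b)\cong\mathbb{E}(b_X)\times\mathbb{E}(b_L)$. For ${}^2G_2(q)$ it uses K\"ulshammer's theorem to get $|\IBr(b)|=5|\Pi|$, contradicting $|\IBr(b)|=|\mathbb{E}(b)|\in\{3,7\}$ from the weight conjecture. Your direct embedding $C_7\rtimes C_3\hookrightarrow\mathbb{E}(b)$ is a reasonable idea, but the step you flag as the main obstacle rests on an argument that fails. Reducedness of $b$ gives uniqueness only for blocks of \emph{normal} subgroups of $G$ covered by $b$. $\mathrm{C}_G(D)$ is not normal, and reducedness alone does not give a unique block of $\mathrm{C}_G(D)$ over $f_D$. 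For example, take $X={}^2G_2(q)$ with $q=3^m$ and $G=X\rtimes\langle\phi\rangle$, where $\phi$ is a field automorphism of odd order $r>1$ centralizing $D$. This is possible since $\mathrm{C}_X(\phi)\cong{}^2G_2(3^{m/r})$ contains a Sylow $2$-subgroup of $X$. The principal block of $G$ is reduced, because every nontrivial normal subgroup contains $X$. Yet $\mathrm{C}_G(D)=D\times\langle\phi\rangle$ has $r$ blocks, all covering $f_D$. In general, elements normalizing $(D,f_D)$ may permute the blocks of $\mathrm{C}_G(P)$ over $f_D$; this is exactly why inertial quotients of covered blocks need not embed in those of covering blocks. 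So the embedding, which is the whole content of your proof, is not established as written.

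What you need is stability, not uniqueness, and here it holds for an elementary reason.
\begin{itemize}
\item Since $G/X\mathrm{C}_G(X)$ embeds in $\mathrm{Out}(X)$, which has odd order, we get $P\le X\times\mathrm{C}_G(X)$. As $D$ is a Sylow $2$-subgroup of $X$, projecting gives $P=D\times(P\cap\mathrm{C}_G(X))$.
\item Hence every $n\in\mathrm{N}_X(D)$ normalizes $P$ and $\mathrm{C}_G(P)$, and $[c,n]\in X\cap\mathrm{C}_G(P)=\mathrm{C}_X(D)$ for all $c\in\mathrm{C}_G(P)$.
\item For ${}^2G_2(q)$ and $J_1$ one has $\mathrm{C}_X(D)=D$, which is central in $\mathrm{C}_G(P)$. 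So $c\mapsto c^{-1}c^{n}$ is a homomorphism $\mathrm{C}_G(P)\to D$.
\item This homomorphism kills all $2'$-elements, so $n$ fixes every $2$-regular element of $\mathrm{C}_G(P)$. It therefore fixes every block idempotent of $\mathcal{O}\mathrm{C}_G(P)$, since these are supported on $2$-regular elements.
\item Thus $\mathrm{N}_X(D)\le\mathrm{N}_G(P,e_P)$, and $\mathrm{N}_X(D)\mathrm{C}_G(P)/\mathrm{C}_G(P)\cong\mathrm{N}_X(D)/\mathrm{C}_X(D)\cong C_7\rtimes C_3$ lies in $\mathbb{E}(b)$, giving the contradiction.
\end{itemize}
For $Co_3$ (and $J_1$), use $\mathrm{Out}(X)=1$ directly, as the paper does. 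With this repair your argument is a valid and more elementary alternative to the paper's. It needs neither K\"ulshammer's theorem nor the weight conjecture, nor the reduction to $G=X\mathrm{C}_G(P)$ with $\mathrm{C}_G(X)\le\mathrm{Z}(G)$ that you borrowed from Lemma~\ref{Iso}.
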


\begin{proof}
Suppose, for a contradiction, that $X/{\rm Z}(X)$ is isomorphic to one of the groups listed. By \cite[Corollary 4.8]{EM}, the group $X$ itself is isomorphic to a simple group of type ${}^{2}G_2(q)$ (with $q=3^m$, $m$ odd, $m\ge3$), $J_1$, or $Co_3$.
Furthermore, the inertial quotient $\mathbb{E}(b_X)$ is isomorphic to a Frobenius group of the form $C_7 \rtimes C_3$.
Let $L={\rm C}_G(X)$, which is normal in $G$ since $X$ is. As $X$ is simple, the product $XL$ is  direct.

Suppose that $X$ is isomorphic to $J_1$ or $Co_3$.
By \cite[Tables 5.3j and 5.3f]{GLS}, $\mathrm{Out}(X)$ is trivial. Thus, $G = X \times L$. Let $b_L$ be a block of $L$ such that $b = b_X \otimes b_L$. Then $\mathbb{E}(b) \cong \mathbb{E}(b_X) \times \mathbb{E}(b_L)$. So $\mathbb{E}(b)$ cannot have prime order, contradicting our assumptions.

Suppose that $X$ is isomorphic to ${}^2G_2(q)$.
By \cite[Theorem 2.5.12]{GLS}, $\mathrm{Out}(X)$ is a cyclic group of odd order.
Let $\bar{G} = G/L$, which we identify with a subgroup of $\mathrm{Aut}(X)$ containing $X$ as a normal subgroup. Let $\bar{b}_X$ be the block of $\bar{G}$ covering $b_X$. Since the trivial character of $X$ extends to $\bar{G}$ and all automorphisms of the block algebra $\mathcal{O}Xb_X$ induced by $\bar{G}$ are inner \cite[Theorem 3.1]{ECW2016}, it follows from \cite[Theorem 7]{Ku} that there is an $\mathcal{O}$-algebra isomorphism
\[
\mathcal{O}\bar{G}\bar{b}_X \cong \mathcal{O}Xb_X.
\]
As a consequence, the restriction of characters yields a bijection between the irreducible (ordinary and Brauer) characters of
$\bar{b}_X$ and $b_X$.

Let $\tilde{G} = G/X$. Applying \cite[Corollary 6.16]{I1976} and \cite[Corollary 8.20]{N1}, we describe the irreducible characters of $G$ lying over $b_X$ by the following sets:
\[
\{ \alpha\beta \mid \alpha \in {\rm IBr}(\bar{b}_X),\; \beta \in {\rm IBr}(\tilde{G}) \},
~{\rm and}~
\{ \alpha\beta \mid \alpha \in {\rm Irr}(\bar{b}_X),\; \beta \in {\rm Irr}(\tilde{G}) \}.
\]
Fix an element $\gamma\in {\rm IBr}(\bar{b}_X)$,  and let $\Pi $ consist of all $\beta\in{\rm IBr}(\tilde{G})$ such that $\gamma\beta \in {\rm IBr}(b)$.
Applying \cite[Theorem 3.3 and Lemma 3.5]{N1}, it's easy to  check that $\alpha\beta \in \IBr(b)$ for any $\alpha \in {\rm IBr}(\bar{b}_X)$ and any $\beta \in \Pi$. Thus, $|{\rm IBr}(b)|=|{\rm IBr}(\bar{b}_X)|\cdot |\Pi|=|{\rm IBr}(b_X)|\cdot |\Pi|$.

Recall that the Alperin weight conjecture holds for all $2$-blocks with abelian defect groups (see \cite[Theorem 1.2]{HZ2025}).
Applying this to the principal block $b_X$ of $X$, we conclude that
$
|\mathrm{IBr}(b_X)| = 5.
$
On the other hand, the hyperfocal subgroup $Q$ of $b$ is contained in $P \cap X$ by Lemma~\ref{NGPeP}. Since $P \cap X$ is the defect group of $b_X$, it is elementary abelian of order $8$. As $Q = [P, \mathbb{E}(b)]$ and $\mathbb{E}(b)$ has prime order, the order of $\mathbb{E}(b)$ must be $3$ or $7$. This forces $|{\rm IBr}(b)|$ to be either $3$ or $7$, which contradicts the fact that $|{\rm IBr}(b)|=5\cdot |\Pi|$.

In both cases, we reach a contradiction, completing the proof.
\end{proof}

\begin{order}\label{proof}{Proof of Theorem \ref{Main1}}.
\rm{
Let $G$ be a finite group and $b$ a $2$-block of $G$ with abelian defect group $P$. Assume that the inertial quotient $\mathbb{E}(b)$ of $b$ has prime order. By \cite[Proposition 6.1]{A}, we may assume that $b$ is reduced.

If $P$ is normal in $G$, then $b$ is inertial, and we are in case (i) of Theorem \ref{Main1}.

Now suppose that $P$ is not normal in $G$. Then the layer $\mathrm{E}(G)$ is non-trivial by Theorem \ref{Reduced-block}. By Corollary \ref{X_i is normal}, all components of $G$ are normal. If the blocks of the components of $G$ covered by $b$ are all nilpotent covered, then $b$ is inertial by Proposition \ref{Inertial}, and we are again in case (i) of Theorem \ref{Main1}.

Finally, suppose that $G$ has a component $X$ such that the block $b_X$ covered by $b$ is non-nilpotent covered. Since the defect group of $b_X$ is abelian, the pair $(X, b_X)$ must satisfy one of the cases (i), (ii), or (iii) of Theorem \ref{2-block-abel}. In case (iii), the hyperfocal subgroup of
$b_X$ is a Klein four group. The conclusion of Theorem \ref{Main1} therefore follows from the combined results of Lemmas \ref{G is Klein}, \ref{G=XR}, and \ref{cannot}.
\qed
 }
\end{order}

\begin{order}\label{proof}{Proof of Corollary \ref{Main2}}.
\rm{
The conclusion follows by combining Theorem \ref{Main1} with  \cite[Theorem 4.36]{CR} and \cite[Theorem 1.2]{H2025}.
\qed
 }
\end{order}

\end{document}